\documentclass[11pt]{amsart}
\usepackage{amsmath,amssymb,amsthm, verbatim, tikz, multicol, enumitem, subfigure}
\usepackage[foot]{amsaddr}
\usepackage[pagewise]{lineno}
\usetikzlibrary{through,calc,cd}
\usepackage{graphicx}
\usepackage{fancyhdr}

\oddsidemargin 0.25in
\evensidemargin 0.25in
\textwidth 6.0in

\newtheorem{theorem}{Theorem}         
\newtheorem{proposition}{Proposition}[section] 
\newtheorem{cor}[proposition]{Corollary}
\newtheorem{lemma}[proposition]{Lemma}

\numberwithin{equation}{section}

\theoremstyle{definition}
\newtheorem{example}[proposition]{Example}
\newtheorem{definition}[proposition]{Definition}
\newtheorem{remark}[proposition]{Remark}

\newcommand{\N}{\mathbb N}
\newcommand{\R}{\mathbb R}
\newcommand{\OO}{\mathcal O}
\newcommand{\I}{\mathcal I}
\newcommand{\Q}{\mathbb Q}
\newcommand{\Z}{\mathbb Z}

\DeclareMathOperator{\Enc}{Enc}
\DeclareMathOperator{\Dec}{Dec}
\DeclareMathOperator{\ran}{range}
\DeclareMathOperator{\PGL}{PGL}

\title{Slow continued fractions and permutative representations of $\OO_N$}

\author{Christopher Linden}
\address{ University of Illinois, Urbana Champaign}
\email{clinden2@illinois.edu}

\begin{document}
\begin{abstract}
Representations of the Cuntz algebra $\OO_N$ are constructed from interval dynamical systems associated with slow continued fraction algorithms introduced by Giovanni Panti. Their irreducible decomposition
formulas are characterized by using the modular group action on real
numbers, as a generalization of results by Kawamura, Hayashi and Lascu.
Furthermore, a certain symmetry of such an interval dynamical system is
interpreted as a covariant representation of the $C^*$--dynamical system of
the `flip-flop' automorphism of $\OO_2$.
\end{abstract}
\maketitle
\section{Introduction}
Permutative representations of the Cuntz algebras $\OO_N$ are a special class of representations arising from branching function systems. Bratteli and Jorgensen \cite{BraJorg} classified irreducible permutative representations of $\OO_{N}$ up to unitary equivalence. Kawamura, Hayashi, and Lascu \cite{KawaHayaLascu} studied the permutative representation arising from the Gauss map, a well-studied dynamical system related to continued fractions. They showed that unitary equivalence classes of irreducible permutative representations of $\OO_{\infty}$ correspond to $\PGL_2(\Z)$ equivalence classes of irrational numbers. Moreover, representations labeled by solutions to quadratic equations with integer coefficients are characterized by the existence of certain eigenvectors. This establishes a correspondence between the number theoretic properties of the label and the properties of the representation. If this connection is further developed, the rich combintorial and algebraic structure of continued fractions can be used to study the representation theory of Cuntz algebras.

In this note, we study permutative representations associated with so-called slow continued fraction algorithms (hereafter SCFAs) recently introduced by Panti \cite{Panti}. This is a broad class of continued fractions including regular continued fractions, Zagier's ceiling continued fractions \cite{Zag}, even and odd continued fractions \cite{BL}, and ``backwards" continued fractions \cite{AF}. Our main result is a correspondence between unitary equivalence classes of irreducible permutative representations of $\OO_{N}$ and $\PGL_2(\Z)$ equivalence classes of \emph{real} numbers for finite $N$ (Theorem \ref{mr}). In contrast to the $N= \infty$ case, it is insufficient to consider irrational numbers. The dynamics of rational numbers under the iteration of SCFAs is more complicated and less studied than that of irrational numbers, and is the source of several technical difficulties which we must overcome. 

 As the name suggests, SCFAs can be thought of as slow downs of analogues of the Gauss map. These ``Gauss maps'' can then be realized as jump transformations of SCFAs. We relate permutative representations associated to SCFAs and those associated to their jump transformations. Precisely, we show that the permutative representation of $\OO_N$ associated with an SCFA can be precomposed with an embedding of $\OO_{\infty}$ into $\OO_N$ to obtain the representation of $\OO_{\infty}$ associated with a jump transformation (Theorem \ref{thmjmp}). In a similar vein, we translate combinatorial relationships between different SCFAs into embeddings of $\OO_N$ into $\OO_2 \rtimes \Z_2$. We show that the representation of $\OO_N$ associated with any SCFA is the composition of this embedding and a representation of $\OO_2 \rtimes_{\theta} \Z_2$ (Theorem \ref{o2emb}). 
 
 The paper is organized in the following way: In section 2 we review Cuntz algebras and their permutative representations. In section 3 we discuss regular continued fractions and conclude with an outline of the argument of Kawamura, Hayashi, and Lascu \cite{KawaHayaLascu}. In section 4 we introduce the basic definitions and facts for SCFAs, largely following Panti \cite{Panti}. In section 5 we consider the symbolic dynamics of SCFAs, paying special attention to rational numbers. In section 6 we state and prove our main results connecting slow continued fractions and permutative representations of Cuntz algebras. In section 7 we consider a few examples of Theorem \ref{mr}.

\section{Permutative representations}
In this section we review Cuntz algebras and their permutative representations. For  $N= 2, 3, \dots, \infty$, the Cuntz algebra $\OO_N$ is the universal $C^*$-algebra generated by $\{S_i\}_{i=1}^N$ satisfying \cite{Cu} 
\begin{equation}\label{cuntz1}
S_i^*S_j = \delta_{ij} 1, \qquad \sum_{i=1}^N S_iS_i^* =1.
\end{equation} For $N = \infty$, the second equality is replaced with
$ \sum_{i=1}^n S_iS_i^* \leq 1$ for all $n \in \N$. Throughout this section we treat the finite and $N= \infty$ cases simultaneously. For convenience, we will write $\N_N$ for $\{1, 2, \dots N\}$, with the understanding that $\N_{\infty} = \N$.
\begin{definition}$\ $
\begin{enumerate}[label=({\roman*})] 
\item A \emph{representation} of a $C^*$-algebra $A$ on a Hilbert space $\mathcal{H}$ is a $*$-homomorphism from $A$ into $B(\mathcal{H})$, the set of all bounded linear operators on $\mathcal{H}$.
\item A subspace $V \subset \mathcal{H}$ is \emph{invariant} for a representation $\pi : A \to B(\mathcal{H})$ if $\pi(a)v \in V$ for any $a \in A$ and $v \in V$.
\item For a representation $\pi$ of $A$ on $\mathcal{H}$ with a closed invariant
subspace $V$, the restriction $\pi|_V$ of $\pi$ to $V$ is defined as the restriction of
the operator $\pi(a)$ to $V$ for each $a \in A$. We call  $\pi|_V : A \to B(V)$ a
\emph{subrepresentation} of $\pi$.
\item A representation $\pi : A \to B(\mathcal{H})$ is \emph{irreducible} if $\{0\}$ and $\mathcal{H}$ are the only closed invariant subspaces for $\pi$. 
\end{enumerate}
\end{definition}
 Any collection of isometries satisfying the relations (\ref{cuntz1}) determines a representation of $\OO_N$ because $\OO_N$ is simple \cite{Cu}. All $C^*$-algebras, representations, and embeddings which we consider are unital.
\begin{definition}\label{BFSdef} (\cite{BraJorg}, Chapter 2)
For $N= 2, 3, \dots, \infty$, a \emph{branching function system} (BFS) of order $N$ on a set $\Omega$ is a collection of injective transformations $\{f_i\}_{i=1}^N$ on $\Omega$ with pairwise disjoint ranges whose union is $\Omega$.
\end{definition}
We will refer to such a system by the tuple $\{\Omega, F, \{f_i\}_{i=1}^N, \{\Delta_i\}_{i=1}^N\}$ where $\Delta_i = f_i(\Omega)$, and $F$ is the piecewise function on $\Omega$  defined by $f_i^{-1}$ on $\Delta_i$.
\begin{definition}\label{Permdef} A \emph{permutative representation} of $\OO_N$ on $\mathcal{H}$ is a representation $\pi$ for which there is an orthonormal basis $\{e_k: k \in K \}$ for $\mathcal{H}$ such that 
\begin{equation}
\pi(S_i) e_n \in \{e_k : k \in K\} \qquad (n \in K, i \in \N_N).
\end{equation}
\end{definition}
\begin{proposition}[\cite{BraJorg}, p. 7]\label{BFS}
Let $\ell^2(\Omega)$ denote the Hilbert space with orthonormal basis $\{e_{\omega} : \omega \in \Omega\}$. Any BFS $\{\Omega, F, \{f_i\}_{i=1}^N, \{\Delta_i\}_{i=1}^N\}$ induces a  permutative representation $\pi_F: \OO_N \to B(\ell^2(\Omega))$ defined by
 \begin{equation}\label{BFSrepeqn}
 \pi_F(S_i)e_{\omega} = e_{f_i(\omega)} \qquad (\omega \in \Omega, i \in \N_{N}).
 \end{equation}
 \end{proposition}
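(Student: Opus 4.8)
The plan is to define the candidate operators directly on basis vectors, verify the Cuntz relations (\ref{cuntz1}) by hand, and then invoke simplicity of $\OO_N$ to conclude. First I would set $\pi_F(S_i)e_\omega := e_{f_i(\omega)}$ for $\omega \in \Omega$ and $i \in \N_N$, and extend linearly to the dense subspace of finitely supported vectors. Because $f_i$ is injective, $\{e_{f_i(\omega)} : \omega \in \Omega\}$ is an orthonormal subset of $\{e_\omega : \omega \in \Omega\}$, so $\pi_F(S_i)$ preserves norms on this dense subspace and therefore extends uniquely to an isometry on all of $\ell^2(\Omega)$, which I still denote $\pi_F(S_i)$.

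Next I would identify the adjoint: from $\langle \pi_F(S_i)e_\omega, e_\eta\rangle = \langle e_{f_i(\omega)}, e_\eta\rangle$ one reads off that $\pi_F(S_i)^* e_\eta = e_{f_i^{-1}(\eta)}$ when $\eta \in \Delta_i$ and $\pi_F(S_i)^* e_\eta = 0$ when $\eta \notin \Delta_i$. With this in hand, the relation $S_i^*S_j = \delta_{ij}1$ follows by evaluating on basis vectors: $\pi_F(S_i)^*\pi_F(S_j)e_\omega = \pi_F(S_i)^* e_{f_j(\omega)}$, which equals $e_\omega$ when $i = j$ (since $f_j(\omega) \in \Delta_j$ and $f_i^{-1}\circ f_i = \mathrm{id}$) and equals $0$ when $i \neq j$ (since then $f_j(\omega) \in \Delta_j$, which is disjoint from $\Delta_i$). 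Similarly one computes that $\pi_F(S_i)\pi_F(S_i)^*$ is the orthogonal projection onto $\overline{\mathrm{span}}\{e_\eta : \eta \in \Delta_i\}$.

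For the summation relation I would use that the $\Delta_i$ are pairwise disjoint with union $\Omega$: the projections $\pi_F(S_i)\pi_F(S_i)^*$ are then mutually orthogonal, and $\sum_{i=1}^n \pi_F(S_i)\pi_F(S_i)^*$ is the projection onto $\overline{\mathrm{span}}\{e_\eta : \eta \in \bigcup_{i=1}^n \Delta_i\}$. For finite $N$ this is the identity operator; for $N=\infty$ it is dominated by the identity, and the partial sums increase to $1$ in the strong operator topology. Thus the family $\{\pi_F(S_i)\}$ satisfies (\ref{cuntz1}), so by simplicity of $\OO_N$ it determines a representation $\pi_F : \OO_N \to B(\ell^2(\Omega))$ satisfying (\ref{BFSrepeqn}), and this representation is permutative by construction, with witnessing orthonormal basis $\{e_\omega : \omega \in \Omega\}$. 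I do not expect a serious obstacle here; the only points requiring care are the correct description of the adjoints $\pi_F(S_i)^*$ and the observation that in the $N=\infty$ case the sum $\sum_i \pi_F(S_i)\pi_F(S_i)^*$ converges only strongly.
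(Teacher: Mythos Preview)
Your argument is correct. The paper does not actually supply a proof of this proposition; it is stated with a citation to \cite{BraJorg}, p.~7, and treated as a known fact. Your direct verification---defining the isometries on basis vectors, computing the adjoints, and checking the relations (\ref{cuntz1}) using injectivity of the $f_i$ and the partition property of the $\Delta_i$---is the standard argument, and your care in distinguishing the finite-$N$ equality from the $N=\infty$ inequality (with strong convergence of the partial sums) is appropriate.
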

 \begin{lemma}\label{eqlemma} If  $\{\Omega, F, \{f_i\}_{i=1}^N, \{\Delta_i\}_{i=1}^N\}$ and $\{\Omega', G, \{g_i\}_{i=1}^N, \{\Delta_i'\}_{i=1}^N\}$ are conjugate, then the representations $\pi_F$ and $\pi_G$ are unitarily equivalent.
 \end{lemma}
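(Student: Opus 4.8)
The plan is to exhibit an explicit unitary intertwiner built from the conjugating map. Unwinding the definition, saying that $\{\Omega, F, \{f_i\}_{i=1}^N, \{\Delta_i\}_{i=1}^N\}$ and $\{\Omega', G, \{g_i\}_{i=1}^N, \{\Delta_i'\}_{i=1}^N\}$ are conjugate means there is a bijection $\varphi\colon\Omega\to\Omega'$ with $\varphi\circ f_i = g_i\circ\varphi$ for every $i\in\N_N$; in particular $\varphi(\Delta_i)=\Delta_i'$ and $\varphi$ conjugates $F$ to $G$. Given such a $\varphi$, I would define $U\colon\ell^2(\Omega)\to\ell^2(\Omega')$ on the distinguished orthonormal bases by $Ue_\omega = e_{\varphi(\omega)}$ and extend linearly. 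Since $\varphi$ is a bijection, $U$ carries an orthonormal basis bijectively onto an orthonormal basis, so it extends to a well-defined unitary, with $U^*e_{\omega'} = e_{\varphi^{-1}(\omega')}$.

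Next I would verify that $U$ intertwines the Cuntz generators. For each $i\in\N_N$ and each $\omega\in\Omega$,
\[
U\pi_F(S_i)e_\omega = Ue_{f_i(\omega)} = e_{\varphi(f_i(\omega))} = e_{g_i(\varphi(\omega))} = \pi_G(S_i)e_{\varphi(\omega)} = \pi_G(S_i)Ue_\omega,
\]
where the first and fourth equalities use the description of $\pi_F$ and $\pi_G$ from Proposition \ref{BFS}, and the middle equality is the conjugacy relation. Since $\{e_\omega : \omega\in\Omega\}$ is a basis and both sides are bounded operators, this gives $U\pi_F(S_i) = \pi_G(S_i)U$, i.e.\ $U\pi_F(S_i)U^* = \pi_G(S_i)$ for all $i$.

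Finally I would promote this from generators to all of $\OO_N$: the maps $a\mapsto U\pi_F(a)U^*$ and $a\mapsto\pi_G(a)$ are both unital $*$-homomorphisms $\OO_N\to B(\ell^2(\Omega'))$ that agree on the generating set $\{S_i\}_{i=1}^N$, hence on the dense $*$-subalgebra they generate, and hence — by continuity of $*$-homomorphisms — on all of $\OO_N$. Thus $U$ implements a unitary equivalence $\pi_F\cong\pi_G$. I expect no genuine obstacle in this argument; the only point deserving care is to record precisely what ``conjugate'' should mean for branching function systems, namely that $\varphi$ must intertwine each $f_i$ with the correspondingly indexed $g_i$ (not merely conjugate the piecewise maps $F$ and $G$), since it is exactly this labelled compatibility that produces the intertwining of each individual generator $S_i$.
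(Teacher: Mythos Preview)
Your proof is correct and follows essentially the same approach as the paper: define the unitary $U$ by $Ue_\omega=e_{\varphi(\omega)}$ for a conjugating bijection $\varphi$, and check $U\pi_F(S_i)=\pi_G(S_i)U$ from the relation $\varphi\circ f_i=g_i\circ\varphi$. Your write-up is simply more detailed, spelling out why $U$ is unitary and why agreement on generators propagates to all of $\OO_N$.
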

 \begin{proof}
 The function systems are conjugate if there exists a bijection  $C: \Omega \to \Omega'$ such that 
 \begin{equation}\label{conjlemma}
 C  \circ f_i = g_i \circ C \qquad i \in \N_N.
 \end{equation} Define the unitary $U: \ell^2(\Omega) \to \ell^2(\Omega')$ by $Ue_{\omega} = e_{C(\omega)}$. From (\ref{conjlemma}), we obtain \begin{equation}
 U\pi_F(S_i) = \pi_G(S_i)U \quad  (i \in \N_N).
 \end{equation}
 \end{proof}
\begin{definition} (\cite{BraJorg}, Chapter 4)
\begin{enumerate}[label=({\roman*})]
\item For a finite word $w = w_1w_2 \cdots w_k$ in the alphabet $\N_N$, we denote $S_{w_1}S_{w_2} \cdots S_{w_k}$ by $S_w$. Let $\mathcal{F}_N$ denote the $C^*$-subalgebra of $\OO_N$ generated by all elements of the form $S_wS_{w'}^*$, where $w$ and $w'$ are finite words with equal length in the alphabet $\N_N$.
\item  We call an irreducible permutative representation of $\OO_N$ a \emph{cycle}. 
\item We call an irreducible component of the restriction of a cycle to $\mathcal{F}_N$ an \emph{atom}.
\end{enumerate}
\end{definition} 
We recall a construction of the shift representation $\pi_S^N$ of $\OO_N$. Let $\Omega_N$ denote the set $N^{\N}$ of all infinite sequences in the alphabet $\{1, ..., N\}$. Define the BFS $\{\sigma_i\}_{i=1}^{N}$ on $\Omega_N$ by 
\begin{equation}
\sigma_i((x_1, x_2, x_3, ...)) =  (i,x_1,x_2, x_3, ...) \qquad (i \in \N_N).
\end{equation}
From Proposition \ref{BFS} we obtain a representation $\pi_S^N$ of $\OO_N$ on $\ell^2(\Omega_N)$ as
\begin{equation}
\pi_S^N(S_i)e_{(x_n)}= e_{\sigma_i((x_n))} \qquad (i \in \N_N \quad (x_n) \in \Omega_N).
\end{equation}

For $(x_n)$ and $(y_n)$ in $\Omega_N := N^{\N}$, write $(x_n) \sim (y_n)$ if there exist $z \in \Z, m \in \N$ such that  $x_{n+z} = y_n$ for $n \geq m$. Write $(x_n) \approx (y_n)$ if $z$ can be taken to be $0$. These are equivalence relations, which we call tail equivalence and eventual equivalence, respectively. We denote the equivalence class of $(x_n)$ under $\sim $ by $[(x_n)]$ and the equivalence class of $x$ under $\approx $ by $[[(x_n)]]$.
\begin{proposition}[\cite{BraJorg}, Chapter 6]\label{universality}

$\ $
\begin{enumerate}[label={\upshape(\roman*)}]

\item The decomposition of $\pi_S^N$ into cycles corresponds to the decomposition of $\ell^2(\Omega_N)$ into subspaces \begin{equation}\label{subspaces}
\ell^2(\Omega_N) = \bigoplus_{[(x_n)] \in \Omega_N/\sim} \mathcal{H}_{[(x_n)]}
\end{equation}
where $\mathcal{H}_{[(x_n)]}$ is the (separable) subspace of $\ell^2(\Omega_N)$ with basis $\{e_{(y_n)} : (y_n) \in [(x_n)]\}$.
\item Any irreducible permutative representation of $\OO_N$ is unitarily equivalent to exactly one such a representation.
\item The decomposition of $\pi_S^N$ (restricted to $\mathcal{F}_N$) into atoms corresponds to the decomposition of $\ell^2(\Omega_N)$ into subspaces \begin{equation}
\ell^2(\Omega_N) = \bigoplus_{[[(x_n)]] \in \Omega_N/\approx} \mathcal{H}_{[[(x_n)]]}
\end{equation}
where $\mathcal{H}_{[[(x_n)]]}$ is the subspace of $\ell^2(\Omega_N)$ with basis $\{e_{(y_n)} : (y_n) \in [[(x_n)]]\}$.
\end{enumerate}
\end{proposition}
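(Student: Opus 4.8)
The plan is to prove the three parts in turn. Begin with part (i): fix a tail class $[(x_n)]$. Since $\pi_S^N(S_i)e_{(y_n)}=e_{(i,y_1,y_2,\dots)}$ and $\pi_S^N(S_i)^*e_{(y_n)}\in\{e_{(y_2,y_3,\dots)},0\}$, the subspace $\mathcal H_{[(x_n)]}$ is invariant, and as $\sim$ partitions $\Omega_N$ these subspaces realise the orthogonal decomposition \eqref{subspaces}. For irreducibility, let $V\subseteq\mathcal H_{[(x_n)]}$ be nonzero, closed and invariant, take $0\neq v=\sum c_{(z_n)}e_{(z_n)}\in V$ and $(y_n)$ with $c_{(y_n)}\neq 0$. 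Applying the co-isometry $\pi_S^N(S_{y_1\cdots y_l})^*$ produces $c_{(y_n)}e_{(y_{l+1},y_{l+2},\dots)}$ plus an error whose squared norm is $\sum_{(z_n)\neq(y_n),\, z_1\cdots z_l=y_1\cdots y_l}|c_{(z_n)}|^2$; this tends to $0$ as $l\to\infty$ because a sequence is determined by its finite prefixes, so applying the isometry $\pi_S^N(S_{y_1\cdots y_l})$ and passing to the limit puts $e_{(y_n)}$ into $V$. Then for any $(z_n)\sim(y_n)$ one has $e_{(z_n)}=\pi_S^N(S_uS_{u'}^*)e_{(y_n)}$ for suitable finite words $u,u'$ matching the two sequences after a common shift, so $V=\mathcal H_{[(x_n)]}$; hence each $\mathcal H_{[(x_n)]}$ is a cycle, and as they exhaust $\ell^2(\Omega_N)$ they are precisely the cycles of $\pi_S^N$.

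For part (ii), let $\pi$ be an irreducible permutative representation with orthonormal basis $\{e_k\}_{k\in K}$, and set $f_i(k)$ to be the index with $\pi(S_i)e_k=e_{f_i(k)}$. The Cuntz relations translate exactly into: the $f_i$ are injective with pairwise disjoint ranges $\Delta_i$ whose union is $K$; hence $\{K,F,\{f_i\},\{\Delta_i\}\}$ is a branching function system and $\pi=\pi_F$ as in Proposition \ref{BFS}. Let $\phi:K\to\Omega_N$ be the address map, $\phi(k)_n=i$ when $F^{n-1}(k)\in\Delta_i$; equivalently $\phi$ is the unique map with $\phi\circ f_i=\sigma_i\circ\phi$. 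Fix $k_0\in K$ and put $(x_n)=\phi(k_0)$. Irreducibility forces $\overline{\pi(\OO_N)e_{k_0}}=\mathcal H$, and since monomials $S_uS_{u'}^*$ send each basis vector to a basis vector or to $0$, it follows that every $e_k$ equals $\pi(S_uS_{u'}^*)e_{k_0}$ for some finite words $u,u'$; reading off $\phi(k)$ from this identity gives $\phi(k)\sim(x_n)$. Conversely, applying a power of $F$ to $k_0$ to realise any prescribed tail and then building up with the $f_i$ shows $\phi$ maps $K$ \emph{onto} the tail class $[(x_n)]$. It remains to prove $\phi$ is injective, and this is the step I expect to be the real obstacle.

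For injectivity, note first that $f_i$ and $F$ restrict to bijections between consecutive fibers of $\phi$, so all fibers $\phi^{-1}((y_n))$ over $[(x_n)]$ have one common cardinality $m$; the goal is to rule out $m\geq 2$. Replacing $(x_n)$ by a tail-equivalent representative, assume it is either not eventually periodic or purely periodic with first block $v$ of length $p$, and let $Q$ be the projection onto $\ell^2(\phi^{-1}((x_n)))$. A direct inspection shows the only monomials $S_uS_{u'}^*$ with $QS_uS_{u'}^*Q\neq 0$ are ``return'' monomials, whose restriction to $Q\mathcal H$ is the identity when $(x_n)$ is aperiodic, and an integer power of the unitary $\pi_F(S_v)|_{Q\mathcal H}$ in the periodic case; hence $Q\pi_F(\OO_N)Q$ lies inside an abelian von Neumann algebra. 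On the other hand, irreducibility of $\pi_F$ makes $\pi_F(\OO_N)$ weakly dense in $B(\mathcal H)$ (its commutant is trivial), and compressing by $Q$ shows $Q\pi_F(\OO_N)Q$ is weakly dense in $B(Q\mathcal H)$, which is non-abelian once $\dim Q\mathcal H\geq 2$; therefore $m=1$. Then $\phi$ is a bijection conjugating $\{f_i\}$ to $\{\sigma_i\}$, and Lemma \ref{eqlemma} gives $\pi\cong\pi_S^N|_{\mathcal H_{[(x_n)]}}$. For uniqueness, a unitary equivalence $\pi_S^N|_{\mathcal H_{[(x_n)]}}\cong\pi_S^N|_{\mathcal H_{[(x_n')]}}$ intertwines the maximal abelian subalgebra generated by the projections $S_wS_w^*$, whose character with spectral eigenvector $e_{(y_n)}$ is carried by $\mathcal H_{[(z_n)]}$ exactly when $(y_n)\in[(z_n)]$; tracing a single such eigenvector through the equivalence forces $(x_n)\sim(x_n')$.

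Part (iii) is the verbatim analogue of part (i) with $\mathcal F_N$ in place of $\OO_N$ and $\approx$ in place of $\sim$: a generator $S_wS_{w'}^*$ of $\mathcal F_N$ has $|w|=|w'|$ and only recodes finitely many leading symbols, so each $\mathcal H_{[[(y_n)]]}$ is $\mathcal F_N$-invariant and these subspaces partition $\ell^2(\Omega_N)$ along the relation $\approx$; the equal-length projections $S_{y_1\cdots y_L}S_{y_1\cdots y_L}^*\in\mathcal F_N$ isolate a basis vector inside any nonzero invariant subspace exactly as in part (i), and the equal-length ``matrix units'' $S_wS_{w'}^*\in\mathcal F_N$ then reach every basis vector of $\mathcal H_{[[(y_n)]]}$. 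Hence each $\mathcal H_{[[(y_n)]]}$ is an atom, and the displayed decomposition is the decomposition into atoms. In this scheme the only non-routine point is the multiplicity-one statement for the address fibers in part (ii): the aperiodic case follows from the rigidity of the diagonal subalgebra, but the periodic case genuinely hinges on the return unitary $\pi_F(S_v)$ --- exactly where periodic, i.e.\ ``rational'', orbits require separate care.
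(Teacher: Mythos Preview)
The paper gives no proof of this proposition: it is quoted directly from \cite{BraJorg}, Chapter~6, so there is nothing in the paper to compare your attempt against.  What you have written is a correct self-contained argument for finite $N$, and it follows the standard Bratteli--Jorgensen line: invariance and irreducibility of the tail-class subspaces via the cylinder projections $S_wS_w^*$; existence in (ii) via the address map $\phi$ and the monomial orbit of a single basis vector; and injectivity of $\phi$ by compressing to a fiber $Q\mathcal H$, observing that $Q\pi_F(\OO_N)Q$ lies in the abelian von~Neumann algebra generated by the return unitary $\pi_F(S_v)|_{Q\mathcal H}$, while irreducibility forces it to be weakly dense in $B(Q\mathcal H)$.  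Your treatment of (iii) as the equal-length analogue of (i) is also correct.

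One caveat worth recording: the paper states the proposition uniformly for $N=2,3,\dots,\infty$, but your sentence ``the Cuntz relations translate exactly into \dots\ whose union is $K$'' relies on $\sum_iS_iS_i^*=1$, which is not a relation in $\OO_\infty$.  For $N=\infty$ there exist irreducible permutative representations that are \emph{not} subrepresentations of the shift---e.g.\ take $K$ to be the set of finite words over $\N$, $f_i(w)=iw$, so that the empty word gives a basis vector annihilated by every $S_i^*$; this is irreducible but carries a nonzero vector in $\bigcap_i\ker\pi(S_i^*)$, which no shift subrepresentation does.  Thus (ii) as literally stated needs either the restriction $N<\infty$ or an extra covering hypothesis.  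This is a wrinkle in the paper's formulation rather than a defect in your method, and it does not affect the downstream applications, since Theorem~\ref{mr} only invokes the proposition for finite $N$.
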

\section{Regular Continued Fractions}
In this section we review a few useful definitions and facts about regular continued fractions. The regular continued fraction expansion of $x \in \I:= [0,1] \setminus \Q$ is written as
\begin{equation}\label{cfdef}
x=[a_1,a_2,\ldots]=\cfrac{1}{a_1+\frac{1}{\displaystyle{a_2+\frac{1}{\ddots}}} }
\end{equation}
where the partial quotients $a_i$ are positive integers. Continued fractions can be understood in terms of the Gauss and Farey maps, $G_R, F_R: \I \to \I$  defined as \begin{equation}
G_R(x)=\frac{1}{x} -\Bigl\lfloor{\frac{1}{x}}\Bigr\rfloor,
 \qquad
F_R(x)=\begin{cases}
\frac{x}{1-x} & \mbox{\rm if $x\in \left( 0,1/2 \right),$} \\
\frac{1-x}{x} & \mbox{\rm if $x\in \left( 1/2,1\right),$} \end{cases}
\end{equation}
where $\lfloor x \rfloor$ is the floor function. For $[a_1, a_2, \dots]$ as in (\ref{cfdef}) $G_R$ and $F_R$ act as:
\begin{eqnarray}\label{fareygaussdef}
G_R \big( [a_1,a_2,a_3,\ldots]\big) =[a_2,a_3,a_4,\ldots ],\\
\label{fareygaussdef2}
F_R \big( [a_1,a_2,a_3,\ldots ]\big) =\begin{cases}
[a_1 -1,a_2,a_3,\ldots ] & \mbox{\rm if $a_1 \geq 2$,} \\
[a_2,a_3,a_4,\ldots] & \mbox{\rm if $a_1=1$.} \end{cases}
\end{eqnarray}
Let $\N_0 := \{0\} \cup \N$. We have the relation
\begin{equation}\label{fareygauss}
G_R(x) = {F_R}^{r(x)+1}(x) \qquad \qquad r(x) =\inf\{ n \in \N_0 : {F_R}^n(x) \in (1/2, 1)\}.
\end{equation}
For $x$ in $\I$, (\ref{cfdef}) implies that $r(x) +1 = a_1$ and in particular $r(x)$ is finite. We say that the Gauss map is the \emph{jump transformation} of the Farey map obtained by inducing on the interval $(1/2, 1)$. From (\ref{fareygauss}), the Gauss map $G_R$ can be regarded as an acceleration of the Farey map $F_R$. Conversely, $F_R$ can be regarded as a slow-down of $G_R$.
\begin{definition}\label{pgl}
Let $\PGL_2(\Z)$ be the group of two by two integer matrices of determinant $\pm 1$, with the matrices $M$ and $-M$ identified. We will regard $\PGL_2(\Z)$ as the group of fractional linear transformations with integer coefficients by identifying the matrix   $ \left[ \begin{smallmatrix}
a & b \\
c & d \end{smallmatrix} \right]  \in \PGL_2(\Z)$ with the function $x \mapsto \frac{ax+b}{cx+d}$.
\end{definition}
\begin{definition}\label{groupequiv}
Let $\Sigma$ be a subgroup of $\PGL_2(\Z)$. We say $x, y \in \R$ are $\Sigma$-equivalent and write $x \sim_{\Sigma} y$ if there exists a matrix $ \left[ \begin{smallmatrix}
a & b \\
c & d \end{smallmatrix} \right] \in \Sigma$ such that $y = \frac{ax+b}{cx+d}$. This is an equivalence relation, and we denote the equivalence class of $x$ by $[x]_{\Sigma}$.
\end{definition}
We recall the following well-known facts about continued fractions:
\begin{proposition}\label{wellknown}(Theorems 2.3, 6.1, and 5.3 in Chapter 10 of \cite{Hua})
\begin{enumerate}[label={\upshape(\roman*)}]
\item Let $\Omega_{\infty}$ denote the set of all sequences of positive integers. The map $\I \ni x \mapsto (a_1, a_2, \dots) \in  \Omega_{\infty}$ defined by the correspondence in (\ref{cfdef}) is bijective.
\item (Lagrange) An irrational number has an eventually periodic continued fraction expansion if and only if it is the solution of a quadratic equation with integer coefficients.
\item (Serret, \cite{Serret}, p. 34) Two irrational numbers $x$ and $y$ have tail equivalent continued fraction expansions if and only if $x \sim_{\PGL_2(\Z)} y$.
\end{enumerate}
\end{proposition}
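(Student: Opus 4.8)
The plan is to reduce all three parts to the arithmetic of the \emph{convergents} of (\ref{cfdef}). For a sequence $(a_1,a_2,\dots)$ of positive integers one sets $p_{-1}=1$, $p_0=0$, $q_{-1}=0$, $q_0=1$, $p_n=a_np_{n-1}+p_{n-2}$, $q_n=a_nq_{n-1}+q_{n-2}$; an induction gives $p_{n-1}q_n-p_nq_{n-1}=(-1)^n$, from which $q_n\to\infty$, the $p_n/q_n$ with $n$ even increase, those with $n$ odd decrease, consecutive ones differ by $1/(q_nq_{n-1})\to0$, and hence $x:=\lim_n p_n/q_n$ exists in $[0,1]$. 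I would then check that $x$ is irrational (a rational limit would make the Gauss-map algorithm terminate, impossible since each $a_i\ge1$) and that running $a_1=\lfloor1/x\rfloor$, $G_R(x)=[a_2,a_3,\dots]$ on $x$ recovers $(a_n)$; this exhibits a two-sided inverse of $x\mapsto(a_n)$ and settles (i). Setting $x_n:=G_R^{\,n}(x)=[a_{n+1},a_{n+2},\dots]\in(0,1)$ and $\xi_n:=1/x_n>1$, the same induction records the identity $x=\frac{p_n+p_{n-1}x_n}{q_n+q_{n-1}x_n}=\frac{p_n\xi_n+p_{n-1}}{q_n\xi_n+q_{n-1}}$, so $x$ is the image of $x_n$ (equivalently $\xi_n$) under a matrix representing an element of $\PGL_2(\Z)$ whose entries are convergents of $x$. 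This identity is the engine for (ii) and (iii).

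For (ii), in the ``easy'' direction I would argue that an eventually periodic expansion gives $x_k=x_{k+m}$, that $G_R^{\,m}$ on the cylinder containing $x_k$ is represented by an integer matrix $\left[\begin{smallmatrix}\alpha&\beta\\ \gamma&\delta\end{smallmatrix}\right]$ of determinant $\pm1$, so $x_k=\frac{\alpha x_k+\beta}{\gamma x_k+\delta}$ gives $\gamma x_k^2+(\delta-\alpha)x_k-\beta=0$ with $\gamma\ne0$ as $x_k\notin\Q$; then $x_k$, and hence $x=\frac{p_k+p_{k-1}x_k}{q_k+q_{k-1}x_k}\in\Q(x_k)$, is a quadratic irrational. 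For the converse, start from $Ax^2+Bx+C=0$ over $\Z$ with $D:=B^2-4AC$, substitute the identity above to get $A_nx_n^2+B_nx_n+C_n=0$ over $\Z$ with $B_n^2-4A_nC_n=D(p_nq_{n-1}-p_{n-1}q_n)^2=D$, factor $A_n=q_{n-1}^2A(p_{n-1}/q_{n-1}-x)(p_{n-1}/q_{n-1}-x')$ (here $x'$ is the conjugate root), and use $|p_{n-1}/q_{n-1}-x|<1/q_{n-1}^2$ to bound $|A_n|$ uniformly in $n$, then $|C_n|$ the same way and $|B_n|$ from the discriminant identity; since $(A_n,B_n,C_n)$ then takes only finitely many values, some value recurs for infinitely many $n$, its quadratic has only two roots, so $x_n=x_{n'}$ for some $n<n'$, and (i) turns this into eventual periodicity.

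For (iii), the identity shows $x\sim_{\PGL_2(\Z)}x_n$ for all $n$, so tail equivalence of the expansions of $x$ and $y$ (say $x_n=y_m$) immediately gives $x\sim_{\PGL_2(\Z)}y$. For the converse, write $y=\frac{ax+b}{cx+d}$ with $ad-bc=\pm1$, normalize so that $cx+d>0$, substitute $x=\frac{p_n\xi_n+p_{n-1}}{q_n\xi_n+q_{n-1}}$ to obtain $y=\frac{P_n\xi_n+R_n}{Q_n\xi_n+S_n}$ with integer entries, determinant $\pm1$, $P_n/Q_n\to y$, $R_n/S_n\to y$, $\xi_n>1$, and (an elementary estimate) $Q_n>S_n>0$ for all large $n$. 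Then invoke the classical lemma that if $PS-QR=\pm1$, $Q>S>0$, and $\xi>1$, then $P/Q$ and $R/S$ are consecutive convergents of $\frac{P\xi+R}{Q\xi+S}$ and $\xi$ is the following complete quotient: this makes $\xi_n=1/x_n$ a complete quotient of $y$, i.e.\ $x_n$ a tail of the expansion of $y$, so the two expansions are tail equivalent.

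I expect the main obstacle to be that ``consecutive-convergents'' lemma in the converse of (iii): proving it needs the best-approximation property of convergents plus a short case check to reach the configuration $Q>S>0$. (One can sidestep it by instead using that $\PGL_2(\Z)$ is generated by $x\mapsto x+1$ and $x\mapsto1/x$, each of which visibly fixes the tail of the continued fraction expansion of an irrational once that expansion is defined on all of $\R\setminus\Q$.) The uniform coefficient bound in the converse of (ii) is a milder version of the same phenomenon, and the inductions on $p_n,q_n$ together with the approximation bound $|p_n/q_n-x|<1/q_n^2$ are routine.
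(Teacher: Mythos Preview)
Your proposal is a faithful and correct rendition of the classical proofs via convergents, discriminant invariance, and the consecutive-convergents lemma. The paper, however, does not prove this proposition at all: it simply states the three facts with a citation to Hua's \emph{Introduction to Number Theory} (Theorems 2.3, 6.1, and 5.3 in Chapter~10) and moves on. So there is no ``paper's own proof'' to compare against --- your sketch is essentially what one would find in the cited reference, and in that sense you have supplied what the paper deliberately omitted.
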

We end this section with a brief sketch of the argument of Kawamura, Hayashi, and Lascu in our terminology. The (regular) Gauss map $G_R$ is a branching function system on $\I = [0,1] \setminus \Q$, which the correspondence in Proposition \ref{wellknown}(i) conjugates to the full shift on $\Omega_{\infty} = \N ^{\N}$. By Lemma \ref{eqlemma}, the representation $\pi_{G_R}$ of $\OO_{\infty}$ associated to the Gauss map by Proposition \ref{BFS} is unitarily equivalent to the shift representation $\pi^{\infty}_S$. The conjugacy sends $\PGL_2(\Z)$-equivalence classes of irrational numbers to tail equivalence classes of sequences, as per Proposition \ref{wellknown}(iii). In light of Proposition \ref{universality}, we obtain a correspondence between unitary equivalence classes of irreducible permutative representations of $\OO_{\infty}$ and $\PGL_2(\Z)$-equivalence classes of irrational numbers. Proposition \ref{wellknown}(ii) implies that an irreducible permutative representation of $\OO_{\infty}$ has finitely many atoms if and only if it is labeled by a class of solutions to quadratic equations with integer coefficients.

\section{Slow Continued Fraction Algorithms}
In this section we introduce SCFAs, and discuss a few useful combinatorial properties.
\begin{definition}\label{unimodular}$\ $
\begin{enumerate}[label=({\roman*})]
\item A subinterval $[\frac{p}{q}, \frac{p'}{q'}] \subset [0,1]$ with rational endpoints is said to be \emph{unimodular} if $\frac{p}{q}$ and $\frac{p'}{q'}$  are reduced fractions such that $pq' - p'q = -1$.
\item  A \emph{unimodular partition} is a finite collection unimodular intervals $I_i$ whose union is $[0,1]$, such that for $i \neq j$, $I_i \cap I_j$ contains at most one point.
\end{enumerate}
\end{definition}
 \begin{definition}\label{scfa}
An SCFA is a finite collection of functions $h_i: [0,1] \to [0,1]$ such that 
\begin{enumerate}[label={\upshape(\roman*)}]
\item Each function $h_i$ is a fractional linear transformation in $\PGL_2(\Z)$.
\item The images $\{h_i([0,1])\}_{i=1}^N$ form a unimodular partition.
\end{enumerate}
\end{definition}
The most important example is the regular Farey map, with inverse branches $ \frac{x}{x+1}$ and $ \frac{1}{x+1}$. In general, the fractional linear transformation $x \mapsto \frac{ax+b}{cx+d}$ is continuous and monotone except at the singular point $-\frac{d}{c}$. The assumption $h_i([0,1]) \subset [0,1]$ ensures that this singularity does not occur on the interval $[0,1]$. Hence $h_i : [0,1] \to [0,1]$ is continuous and strictly monotone. By assumption, $h_i([0,1])$ is a unimodular interval, which we denote $[\frac{p_i}{q_i}, \frac{p_i'}{q_i'}	].$ If $h_i$ is increasing, then $h_i(\frac{0}{1}) = \frac{p_i}{q_i}$ and $h_i(\frac{1}{1}) = \frac{p_i'}{q_i'}$. We therefore have the formula 
\begin{equation}
h_i(x) =  \left[ \begin{array}{cc}
 p'_i-p_i & p_i \\
q'_i-q_i &q_i \end{array} \right](x).
\end{equation} If $h_i$ is decreasing, the situation is reversed and
\begin{equation}
h_i(x) =  \left[ \begin{array}{cc}
 p_i-p_i' & p_i' \\
q_i-q_i' &q_i' \end{array} \right](x).
\end{equation}
In general, denoting the determinant of $h_i$ (equivalently, the sign of its derivative), by $\epsilon_i \in \{\pm 1\}$, $h_i$ is given by the formula
\begin{equation}\label{matrixformula}
h_i(x) =  \left[ \begin{array}{cc}
 p'_i-p_i & p_i \\
q'_i-q_i &q_i \end{array} \right]\left[ \begin{array}{cc}
-1 & 1 \\
0 & 1 \end{array} \right]^{(1- \epsilon_i)/2}(x) .
\end{equation}
 Hence the data of a unimodular partition $\{[\frac{p_i}{q_i}, \frac{p_i'}{q_i'}	]\}_{i=1}^N$ and signs $\{\epsilon_i\}_{i=1}^N$, $\epsilon_i \in \{-1, 1\}$ specifies an SCFA. Our convention will be to order the unimodular partition (and hence the $h_i$) such that $0 = \frac{p_1}{q_1}$ and $\frac{p_i'}{q_i'} = \frac{p_{i+1}}{q_{i+1}}$ for $1 \leq i < N$. 
\begin{proposition}
Fix an SCFA $\{h_i\}_{i=1}^N$, and let $f_i$ be the restriction of $h_i$ to $\I:= [0,1] \setminus \Q$. The functions $\{f_i\}_{i=1}^N$ form a BFS on $\I$.
\end{proposition}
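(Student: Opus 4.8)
The plan is to verify, one at a time, the three defining conditions of a branching function system in Definition \ref{BFSdef} for the family $\{f_i\}_{i=1}^N$ on $\Omega=\I$: that each $f_i$ is an injection of $\I$ into itself, that the ranges $f_i(\I)$ are pairwise disjoint, and that $\bigcup_{i=1}^N f_i(\I)=\I$. Everything follows quickly from the two structural hypotheses defining an SCFA — namely that each $h_i$ lies in $\PGL_2(\Z)$ and that the images $I_i:=h_i([0,1])$ form a unimodular partition — together with one elementary fact about integral fractional linear transformations.

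That fact is the following: any element of $\PGL_2(\Z)$ has an inverse with integer entries, hence restricts to a bijection of $\Q\cup\{\infty\}$ onto itself, and therefore also to a bijection of $\R\setminus\Q$ onto itself; moreover it is injective on its entire domain, the only excluded point being its pole, which lies in $\Q\cup\{\infty\}$. For $h_i$ this pole is outside $[0,1]$ by the standing assumption $h_i([0,1])\subseteq[0,1]$, so (as already noted in the text preceding Definition \ref{scfa}) $h_i$ is a strictly monotone homeomorphism of $[0,1]$ onto the unimodular interval $I_i=[\tfrac{p_i}{q_i},\tfrac{p_i'}{q_i'}]$. Since $h_i$ and $h_i^{-1}$ both carry $\Q$ into $\Q$, one gets $h_i(\I)=I_i\setminus\Q$; in particular $f_i:=h_i|_{\I}$ is a well-defined injection $\I\to\I$ whose range is exactly $I_i\setminus\Q=I_i\cap\I$.

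For disjointness, fix $i\neq j$: by the definition of a unimodular partition $I_i\cap I_j$ is either empty or a single point, and in the latter case that point is a shared endpoint of two reduced-fraction intervals and hence rational, so $I_i\cap I_j\cap\I=\emptyset$ and therefore $f_i(\I)\cap f_j(\I)=\emptyset$. Covering is then immediate from $\bigcup_{i=1}^N I_i=[0,1]$:
\begin{equation*}
\bigcup_{i=1}^N f_i(\I)=\bigcup_{i=1}^N\bigl(I_i\cap\I\bigr)=\Bigl(\bigcup_{i=1}^N I_i\Bigr)\cap\I=[0,1]\cap\I=\I.
\end{equation*}
This checks all the conditions of Definition \ref{BFSdef}.

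I do not expect a genuine obstacle here; the only point that deserves to be stated with care is that forming the image $h_i(\I)$ is compatible with deleting $\Q$ — that is, that $h_i$ sends the irrationals of $[0,1]$ exactly onto the irrationals of $I_i$ — which is precisely the content of the observation that $\PGL_2(\Z)$ preserves $\Q\cup\{\infty\}$. It is also harmless that $\I$ omits the points $0$ and $1$, since every $I_i$ has rational endpoints, which are in any case already absent from $I_i\cap\I$.
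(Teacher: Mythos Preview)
Your proof is correct and follows essentially the same route as the paper's: both verify injectivity via strict monotonicity of $h_i$ on $[0,1]$, disjointness of ranges via the fact that distinct $I_i$ meet in at most a rational point, and covering via $\bigcup_i I_i=[0,1]$ together with the fact that $\PGL_2(\Z)$ preserves irrationality. Your version is somewhat more explicit in justifying $h_i(\I)=I_i\cap\I$ by invoking that $h_i^{-1}$ also carries $\Q$ to $\Q$, but the underlying argument is identical.
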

\begin{proof}
As we have already remarked, the condition  $h_i([0,1]) \subset [0,1]$ guarantees that $h_i$ is continuous and strictly monotone on $[0,1]$. Hence $f_i$ is injective. Since $[0,1] \subset \cup_{i=1}^N h_i([0,1])$ and $h_i$ maps irrational numbers to irrational numbers, $\I \subset  \cup_{i=1}^N f_i(\I)$. For $i \neq j$, $h_i([0,1]) \cap h_j([0,1])$ is either empty or a rational singleton. Hence $f_i(\I) \cap f_j(\I) = \emptyset$.
\end{proof}
\begin{definition}\label{jumps}  With the notation introduced in Definition \ref{BFSdef}(i), let $\{\I, F, \{f_i\}_{i=1}^N, \{\Delta_i\}_{i=1}^N\}$ be the BFS associated to an SCFA. Suppose $E \subset \I$ is of the form $\displaystyle{\cup_{i=j}^k \Delta_i}$ for some $1 \leq j \leq k \leq N$. Define
\begin{align}
{\I}_E &:= \{ x \in \I : F^n(x) \in E \text{ for infintely many } n \in \N, \\
r(x) &:= \inf\{ n \in \N_0 : F^n(x) \in E\} \text{ for } x \in {\I}_E.
\end{align}
The \emph{jump transformation} of $F$ induced on $E$ is $G(F,E): {\I}_E \to {\I}_E$  
\begin{equation}
G(F,E)(x) = F^{r(x)+1}(x).
\end{equation}
If $E$ is a proper subset of $\I$, then $G$ will have countable many inverse branches $g_j: {\I}_E \to {\I}_E$ which also form a BFS.
\end{definition}
\begin{figure}
\begin{multicols}{2}
\centering
\includegraphics[scale=0.4,bb = 0 0 360 220]{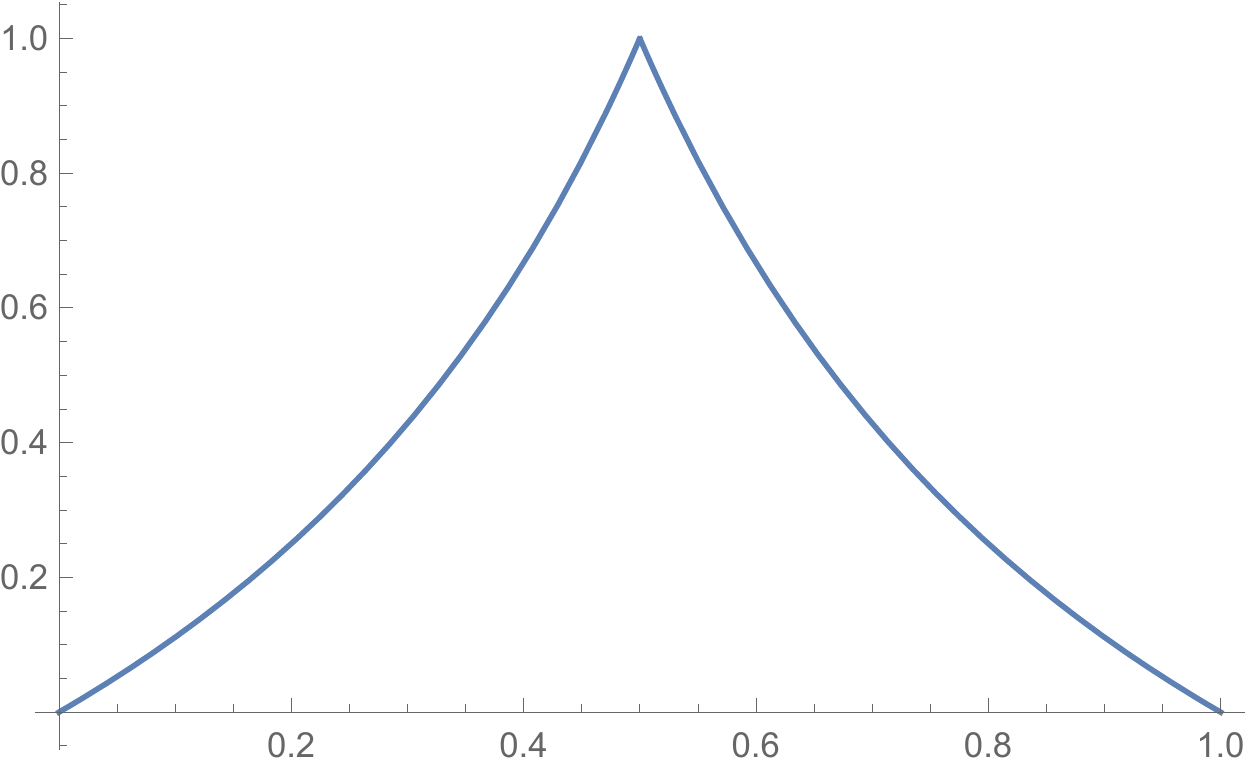}\\
$F_R$\\
\includegraphics[scale=0.4,bb = 0 0 360 220]{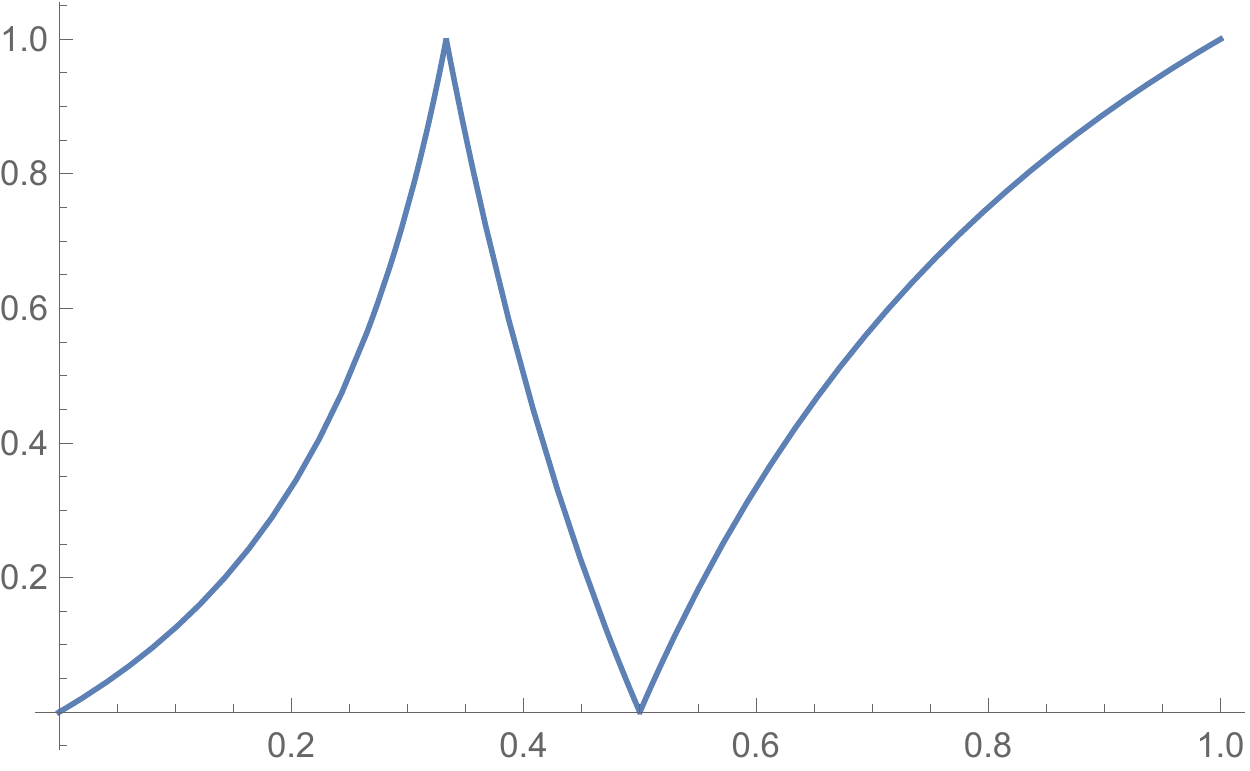}\\
$F_E$\\
\includegraphics[scale=0.4,bb = 0 0 360 220]{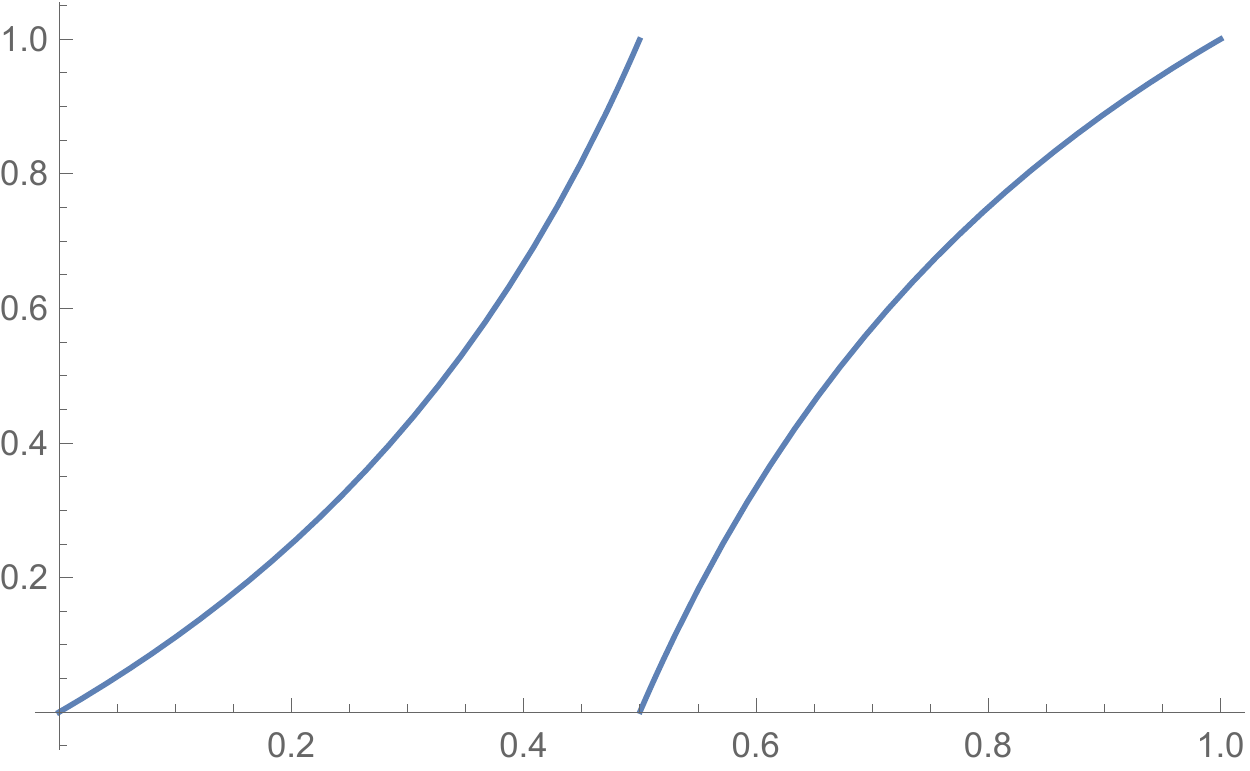}\\
$F_B$\\
\includegraphics[scale=0.4,bb = 0 0 360 220]{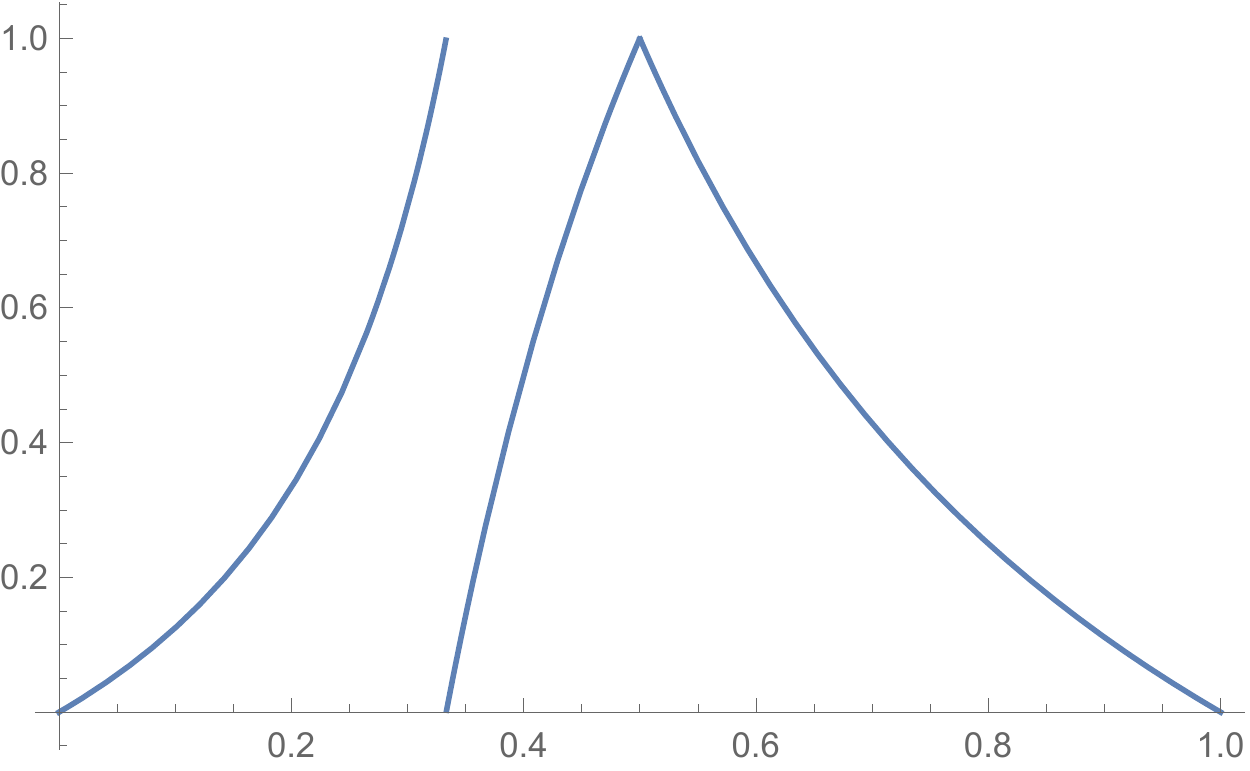}\\
$F_O$
\end{multicols}
\caption{The SCFAs of Example \ref{cfexamples}.}\label{figure1}
\end{figure}
With the above generalization of the relationship between the Farey and Gauss maps in hand, we can now describe several motivating examples of SCFAs and their jump transformations.
 \begin{example}\label{cfexamples}(See also Figure \ref{figure1}.)
 \begin{enumerate} [label=({\roman*})]
\item The classical Farey map $F_R$ in (\ref{fareygaussdef2}) is the SCFA associated with the partition $[0,1/2], [1/2,1]$ and signs $1,-1$. From (\ref{fareygauss}), inducing on $\Delta_2 =[1/2, 1] \cap \I$ yields the classical Gauss map $G_R$ as its acceleration \cite{AO, BY, H}.
\item An important SCFA which we denote $F_B$ is the SCFA associated with the partition $[0,1/2], [1/2,1]$ and signs $1,1$. Inducing $F_B$ on $\Delta_2 = [1/2, 1] \cap \I$ yields Zagier's ceiling algorithm $G(x) = \lceil \frac{1}{x} \rceil - \frac{1}{x}$, where $\lceil x \rceil$ is the ceiling function \cite{Zag}. Inducing on $\Delta_1 = [0, 1/2] \cap \I$ yields the ``backwards" continued fractions \cite{AF}.
\item The even and odd Farey maps $F_E$ and $F_O$  are  SCFAs associated with the partition $[0,1/3], [1/3,1/2], [1/2, 1]$ and signs $1,-1, 1$ and $1, 1, -1$, respectively. Inducing $F_E$ and $F_O$ on $\Delta_2 \cup \Delta_3= [1/3,1] \cap \I$ yields the even and odd Gauss maps \cite{BM, BL, Sch1}.
\end{enumerate}
\end{example}
The following lemma provides a useful description of the inverse branches of an arbitrary SCFA as compositions of those of $F_B$. Let $b_1 = \left[ \begin{smallmatrix}
1 & 0 \\
1 &1  \end{smallmatrix} \right]$ and $b_2 = \left[ \begin{smallmatrix}
0 & 1 \\
-1 &2 \end{smallmatrix} \right]$.
\begin{lemma}\label{unimodularstructure} Fix an SCFA $\{h_i\}_{i=1}^N$.
\begin{enumerate}[label=(\roman*)]
\item Each $ h_i =  \left[ \begin{smallmatrix}
 p'_i-p_i & p_i \\
q'_i-q_i &q_i \end{smallmatrix} \right]\left[ \begin{smallmatrix}
-1 & 1 \\
0 & 1 \end{smallmatrix}\right]^{(1-\epsilon_i)/2}$ can be written as $b_{\nu_i}T^{(1-\epsilon_i)/2}$ where $b_{\nu_i}$ is a word in $\{b_1, b_2\}$, $T(x) = 1-x$.
\item The words $\{b_{\nu_i}\}_{i=1}^N$ are the leaves of a finite, rooted binary tree. In particular, none of the words are left factors of another. The word $b_{\mu}b_1$ is a left factor of a word in $\{b_{\nu_i}\}_{i=1}^N$ if and only if $b_{\mu}b_2$ is a left factor of a word in $\{b_{\nu_i}\}_{i=1}^N$.
\end{enumerate}
\end{lemma}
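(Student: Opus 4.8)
The plan is to realize $b_1$ and $b_2$ as the operations ``pass to the left/right child'' in the Stern--Brocot tree of unimodular intervals, and then to read off both parts from the combinatorics of that tree. For a unimodular interval $I=[\tfrac pq,\tfrac{p'}{q'}]$ write $M_I=\left[\begin{smallmatrix}p'-p&p\\ q'-q&q\end{smallmatrix}\right]\in\PGL_2(\Z)$, the increasing fractional linear map sending $[0,1]$ onto $I$ with $0\mapsto\tfrac pq$ and $1\mapsto\tfrac{p'}{q'}$; note $M_{[0,1]}=\mathrm{id}$. A one--line $2\times 2$ computation gives
\[
M_I\, b_1=M_{[\,\tfrac pq,\ \tfrac{p+p'}{q+q'}\,]},\qquad M_I\, b_2=M_{[\,\tfrac{p+p'}{q+q'},\ \tfrac{p'}{q'}\,]},
\]
so right multiplication by $b_1$ (resp.\ $b_2$) replaces $I$ by its left (resp.\ right) half at the mediant; in particular $b_1,b_2$ are the inverse branches of $F_B$, consistent with Example~\ref{cfexamples}(ii).

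Next I would recall (this is essentially in Panti~\cite{Panti}) the binary--tree structure of unimodular intervals: the two mediant halves of a unimodular interval are again unimodular, and conversely every unimodular interval other than $[0,1]$ is a mediant half of a unique unimodular ``parent'' contained in $[0,1]$ --- inverting the displayed identities gives the parent as $[\tfrac pq,\tfrac{p'-p}{q'-q}]$ when $q<q'$ and $[\tfrac{p-p'}{q-q'},\tfrac{p'}{q'}]$ when $q>q'$, and the only nontrivial point is the elementary inequality (e.g.\ $p'-p\le q'-q$ when $q<q'$, which follows from unimodularity and $I\subseteq[0,1]$) keeping the parent inside $[0,1]$; since the sum of denominators strictly drops under the parent map, iterating reaches $[0,1]$. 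Thus the unimodular intervals are the vertices of an infinite complete rooted binary tree $\mathcal T$ with root $[0,1]$, and by induction along a path the word in $\{b_1,b_2\}$ coding the path from $[0,1]$ to $I$ equals exactly $M_I$. By~\eqref{matrixformula} each SCFA branch is $h_i=M_{I_i}T^{(1-\epsilon_i)/2}$ with $I_i=h_i([0,1])$ and $T=\left[\begin{smallmatrix}-1&1\\0&1\end{smallmatrix}\right]$, $T(x)=1-x$; taking $b_{\nu_i}$ to be the path word of $I_i$ proves~(i).

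For~(ii) the key observation, immediate from $\mathcal T$ being a tree, is that any two unimodular intervals are either nested or have disjoint interiors (two vertices whose root--paths diverge land in the two halves of a common vertex, and those halves meet in a single point). Consequently a unimodular partition $\{I_i\}_{i=1}^N$ is exactly the leaf set of the finite rooted subtree $\mathcal T'\subset\mathcal T$ that is the union of the root--paths of the $I_i$: each $I_i$ is a leaf and every leaf is some $I_i$ (no vertex of $\mathcal T'$ is a proper subinterval of any $I_i$, by interior--disjointness). Moreover $\mathcal T'$ is \emph{full}: if $J\in\mathcal T'$ is internal then $J\supsetneq I_i$ for some $i$, one child $J_a$ of $J$ lies on the path to $I_i$ and so $J_a\in\mathcal T'$, while the other child $J_b$ has an interior point inside some $\mathrm{int}(I_k)$ (the $I_k$ cover $[0,1]$ up to finitely many points), forcing $J_b$ and $I_k$ to be nested, and chasing the nestings --- using that nothing lies strictly between $J$ and its child $J_b$ --- yields $I_k\subseteq J_b$, hence $J_b\in\mathcal T'$. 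Translating back through the correspondence sending a vertex to its path word, under which ``ancestor'' becomes ``left factor'', this says precisely that the $b_{\nu_i}$ are the leaves of the finite binary tree $\mathcal T'$, that none is a left factor of another, and that $b_\mu b_1$ is a left factor of some $b_{\nu_i}$ iff $b_\mu$ is an internal vertex of $\mathcal T'$ iff $b_\mu b_2$ is a left factor of some $b_{\nu_j}$.

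I expect the main obstacle to be the fullness of $\mathcal T'$, since it is the only step forcing one to combine the covering property of the partition with the nested--or--interior--disjoint dichotomy and to run through the possible inclusions; the tree structure of unimodular intervals and the matrix identities are routine. A secondary point worth pinning down first is the orientation bookkeeping --- that the path word of $I_i$ is literally the matrix $b_{\nu_i}$ occurring in~\eqref{matrixformula} rather than its inverse --- which is why it helps to fix the conventions ``$M_I$ increasing'' and ``$M_{[0,1]}=\mathrm{id}$'' before computing.
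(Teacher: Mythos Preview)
Your proof is correct and follows the same core idea as the paper: both rest on the two matrix identities $M_I\,b_1=M_{I_{\mathrm{left}}}$ and $M_I\,b_2=M_{I_{\mathrm{right}}}$, which interpret right multiplication by $b_1,b_2$ as passing to the mediant halves, together with the Stern--Brocot tree structure of unimodular subintervals of $[0,1]$. The paper's proof simply asserts that ``any unimodular partition can be obtained uniquely from $[\tfrac{0}{1},\tfrac{1}{1}]$ by repeatedly splitting'' and then records the two matrix products; you supply the details it omits --- the existence and well-definedness of the parent map (including the inequality keeping the parent inside $[0,1]$), the nested-or-interior-disjoint dichotomy, and the fullness of the finite subtree $\mathcal T'$ determined by the partition. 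So the route is the same; your write-up is just a more thorough execution of it.
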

\begin{proof}
 Any unimodular partition can be obtained uniquely from the interval $[\frac{0}{1},\frac{1}{1}]$ by repeatedly splitting an interval $[\frac{p}{q}, \frac{p'}{q'}]$ into two subintervals  $[\frac{p}{q}, \frac{p+p'}{q+q'}]$ and  $[\frac{p+p'}{q+q'}, \frac{p'}{q'}]$. Equation (\ref{matrixformula}) associates  the intervals  $[\frac{p}{q}, \frac{p'}{q'}]$, $[\frac{p+p'}{q+q'}, \frac{p'}{q'}]$, and  $[\frac{p}{q}, \frac{p+p'}{q+q'}]$ to the matrices $\left[ \begin{smallmatrix}
 p'-p & p \\
q'-q &q \end{smallmatrix} \right]$, $\left[ \begin{smallmatrix}
 -p & p +p' \\
 -q &  q + q'\end{smallmatrix} \right]$, and $\left[ \begin{smallmatrix}
 p' & p \\
q' &q \end{smallmatrix} \right]$.  The lemma follows from the observation that splitting an interval corresponds to right multiplication of its associated matrix with $b_1$ and $b_2$:
\begin{equation}
\left[ \begin{array}{cc}
 p'-p & p \\
q'-q &q \end{array} \right]
\left[ \begin{array}{cc}
1 & 0 \\
1 &1 \end{array} \right] = \left[ \begin{array}{cc}
 p' & p \\
q' &q \end{array} \right],
\end{equation}
\begin{equation}
\left[ \begin{array}{cc}
 p'-p & p \\
q'-q &q \end{array} \right]
\left[ \begin{array}{cc}
0 & 1 \\
-1 &2 \end{array} \right] = \left[ \begin{array}{cc}
 -p & p +p' \\
 -q &  q + q'\end{array} \right]. \qedhere
\end{equation}
\end{proof}
\section{Symbolic Dynamics for SCFAs}
 The analogue of continued fractions are \emph{itineraries} with respect to an SCFA. Although only irrational numbers have infinite continued fraction expansions, every real number will have an infinite itinerary. For the same reason that the (terminating) continued fraction expansions of rational numbers are not unique, each rational number in $(0,1)$ will have two itineraries. Instead of a bijection, we therefore work separately with a surjective decoding map and an injective encoding map.
\begin{definition}\label{itinerary}  Fix an SCFA $\{h_i\}_{i=1}^N$ with associated BFS $\{\I, F, \{f_i\}_{i=1}^N, \{\Delta_i\}_{i=1}^N\}$.
\begin{enumerate}[label=({\roman*})]
\item A sequence $(x_n) \in \Omega_N$ is an $F$-\emph{itinerary} for $x$ if $x \in \cap_{n=1}^{\infty} h_{x_n} \circ h_{x_{n-1}} \circ \cdots \circ h_{x_1}([0,1])$.
\item We write $x \sim_F y$ if there exist tail equivalent $F$-itineraries for $x$ and $y$. This is an equivalence relation, and we write $[x]_F$ for the $\sim_F$-equivalence class of $x$.
\item The Panti-Serret group $\Sigma_F$ associated with $\{\I, F, \{f_i\}_{i=1}^N, \{\Delta_i\}_{i=1}^N\}$ is the subgroup of $\PGL_2(\Z)$ generated by the matrices $h_i$.
\item The SCFA $\{\I, F, \{f_i\}_{i=1}^N, \{\Delta_i\}_{i=1}^N\}$ is said to satisfy the Serret theorem if for irrational numbers, the relation $\sim_{F}$ coincides with $\sim_{\Sigma_F}$ as in Definition \ref{groupequiv}.
\end{enumerate}
\end{definition}
The Serret theorem in Definition \ref{itinerary}(iv) holds for some, but not all SCFAs. In general, each $\Sigma_F$ equivalence class is the union of $\sim_F$ equivalence classes. A practical criterion for checking the validity of the Serret theorem is established in \cite{Panti}.
We recall the following facts from \cite{Panti2}:
\begin{proposition}\label{lagrange}(Panti)
\begin{enumerate}[label={\upshape(\roman*)}]
\item (Observation 3 in \cite{Panti2}) The intersection in Definition \ref{itinerary}(i) is always a singleton.
\item  (Generalized Lagrange Theorem, Section 3 of \cite{Panti2}) An irrational number has an eventually periodic itinerary with respect to every SCFA if and only if it is a solution to a quadratic equation with integer coefficients.
\end{enumerate}
\end{proposition}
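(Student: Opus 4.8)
The plan is to handle the two items through the nested closed intervals $J_n:=h_{x_1}\circ\cdots\circ h_{x_n}([0,1])$ attached to a sequence $(x_n)\in\Omega_N$ (so that $(x_n)$ is an $F$-itinerary for a point $x$ exactly when $x\in\bigcap_nJ_n$, cf.\ Definition~\ref{itinerary}(i)). Each $J_n$ is the image of $[0,1]$ under a matrix of $\PGL_2(\Z)$, hence a unimodular interval $[\tfrac{p_n}{q_n},\tfrac{p_n'}{q_n'}]$ with $|J_n|=1/|q_nq_n'|$, and $J_{n+1}=M_n(\Delta_{x_{n+1}})\subsetneq M_n([0,1])=J_n$, where $M_n=h_{x_1}\circ\cdots\circ h_{x_n}$ and the inclusion is strict because $N\ge2$ forces $\Delta_{x_{n+1}}\subsetneq[0,1]$. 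For (i): the $J_n$ being strictly decreasing closed intervals, $\bigcap_nJ_n=[\alpha,\beta]$ for some $\alpha\le\beta$; if $\alpha<\beta$ then $|J_n|\ge\beta-\alpha>0$ for all $n$, so $|q_nq_n'|=1/|J_n|$ is bounded, hence $q_n$ and $q_n'$ are bounded, hence $\{J_n:n\in\N\}$ is finite (only finitely many unimodular intervals have bounded denominators) --- contradicting that the $J_n$ are pairwise distinct. So $\bigcap_nJ_n$ is a single point, and moreover $|J_n|\to0$. I would also record two elementary facts for use below: an irrational $y\in(0,1)$ has a \emph{unique} $F$-itinerary (two of them must agree in the first letter, else $y$ lies in the intersection of two partition pieces --- at most a rational point --- and one induces via $h_{x_1}^{-1}$), and the itinerary of $F^m(y)$ is the $m$-shift of that of $y$.

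For the ``only if'' half of (ii): if the irrational $\alpha\in(0,1)$ has itinerary $(x_n)$ with $x_{n+T}=x_n$ for all $n\ge m$, then $F^m(\alpha)$ and $F^{m+T}(\alpha)$ have equal itineraries and so coincide; thus $\beta:=F^m(\alpha)$ is a fixed point of $M:=(h_{x_{m+1}}\circ\cdots\circ h_{x_{m+T}})^{-1}\in\PGL_2(\Z)$, giving an integral quadratic equation $c\beta^2+(d-a)\beta-b=0$ for $M=\left[\begin{smallmatrix}a&b\\c&d\end{smallmatrix}\right]$. Here $c\neq0$, since a Möbius map with an irrational finite fixed point is not affine and $M\neq\mathrm{id}$ (a composition of the $h_i$ is never surjective onto $[0,1]$). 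Then $\alpha=(h_{x_1}\circ\cdots\circ h_{x_m})(\beta)$ is a $\PGL_2(\Z)$-transform of the quadratic irrational $\beta$, hence a quadratic irrational.

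For the ``if'' half --- the substantive one --- let $\alpha\in(0,1)$ be irrational with $\Phi(\alpha,1)=0$ for an integral form $\Phi(u,v)=p_0u^2+q_0uv+r_0v^2$, with conjugate root $\alpha'$, discriminant $\delta=q_0^2-4p_0r_0$, and $\Phi(t,1)=p_0(t-\alpha)(t-\alpha')$; fix an SCFA $F$ with itinerary $(x_n)$ for $\alpha$, and write $M_n=\left[\begin{smallmatrix}A_n&B_n\\C_n&D_n\end{smallmatrix}\right]$. Then $\alpha_n:=F^n(\alpha)=M_n^{-1}(\alpha)\in(0,1)$ is an irrational root of $\Phi_n(t)=p_nt^2+q_nt+r_n$, where $\Phi_n$ is $\Phi$ transformed by $M_n$, so $\mathrm{disc}(\Phi_n)=\delta$ for every $n$. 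Following Lagrange, it suffices to bound $|p_n|,|q_n|,|r_n|$ uniformly: then $(\alpha_n)$ takes finitely many values, so $\alpha_m=\alpha_{m'}$ for some $m<m'$, and by uniqueness of itineraries $(x_n)$ is eventually periodic. For the bound one uses
\[
r_n=\Phi(B_n,D_n)=D_n^2\,\Phi\!\left(\tfrac{B_n}{D_n},1\right),\qquad p_n+q_n+r_n=\Phi(A_n+B_n,C_n+D_n)=(C_n+D_n)^2\,\Phi\!\left(\tfrac{A_n+B_n}{C_n+D_n},1\right),
\]
together with the observation that $\tfrac{B_n}{D_n}$ and $\tfrac{A_n+B_n}{C_n+D_n}$ are the two endpoints of $J_n$, hence lie within $|J_n|=1/|D_n(C_n+D_n)|$ of $\alpha$; since $|J_n|\to0$ by part (i), this gives $|r_n|\le\tfrac{|D_n|}{|C_n+D_n|}K$ and $|p_n+q_n+r_n|\le\tfrac{|C_n+D_n|}{|D_n|}K$ for large $n$, where $K=|p_0|(1+|\alpha-\alpha'|)$. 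Because $\alpha$ is a quadratic irrational it is badly approximable, $|\alpha-p/q|\ge c_\alpha/q^2$; applied to the unimodular interval $J_n\ni\alpha$ this confines its endpoint denominators to a bounded ratio, $c_\alpha<|D_n|/|C_n+D_n|<1/c_\alpha$, so both displayed quantities are bounded and hence so is $|p_n+q_n|$. Finally, substituting $q_n=(p_n+q_n)-p_n$ into $q_n^2-4p_nr_n=\delta$ exhibits $p_n$ as a root of a monic integral quadratic with already-bounded coefficients, so $|p_n|$ --- and then $|q_n|$ --- is bounded, completing the estimate.

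I expect the crux to be exactly this coupling of the Lagrange estimate with the arithmetic of $\alpha$. For the regular continued fraction the two endpoint denominators of a cylinder are automatically within a factor $2$ of each other, so the naive bound goes through; but a general SCFA can produce arbitrarily lopsided cylinders, such as $[0,1/m]$, on which the estimates for $r_n$ and $p_n+q_n+r_n$ degrade by an unbounded factor. The resolution is that such lopsidedness cannot persist for the cylinders surrounding a fixed quadratic irrational: bad approximability confines the ratio of the endpoint denominators to a fixed compact interval, which is precisely what makes the argument close. Two standing points to watch are the hypothesis $N\ge2$ (so that $\Delta_i\subsetneq[0,1]$ and the $J_n$ genuinely shrink) and the fact that $\alpha\in(0,1)\setminus\Q$ keeps the whole forward orbit off the rational partition boundaries, so that every itinerary occurring in the argument is well defined and unique.
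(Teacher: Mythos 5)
The paper offers no proof of this proposition: it is quoted from Panti \cite{Panti2} with explicit citations, so there is nothing internal to compare against and your argument has to stand on its own. It does. For (i), the finiteness of unimodular subintervals of $[0,1]$ with bounded denominators, applied to the strictly nested cylinders, is a clean and correct way to get both the singleton statement and $|J_n|\to 0$; note only that you have silently replaced the paper's cylinder $h_{x_n}\circ\cdots\circ h_{x_1}([0,1])$ from Definition~\ref{itinerary}(i) by the nested $h_{x_1}\circ\cdots\circ h_{x_n}([0,1])$ --- this is the reading forced by consistency with $\Enc_F$ and Proposition~\ref{subshift} (and it is Panti's convention), so it is surely what is intended, but it is worth flagging since the sets in the paper's literal formula are not nested. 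For (ii), the ``only if'' direction is standard and correctly handled, including the check that $c\neq 0$. In the ``if'' direction you have correctly identified the one genuine obstacle to transplanting Lagrange's coefficient-bounding argument to a general SCFA: the cylinder $J_n$ can be arbitrarily lopsided, so the estimates for $r_n=\Phi(B_n,D_n)$ and $p_n+q_n+r_n=\Phi(A_n+B_n,C_n+D_n)$ individually carry the uncontrolled factors $|D_n|/|C_n+D_n|$ and its reciprocal. Your repair via bad approximability of quadratic irrationals is valid. There is, however, a cheaper way to close that step which needs no Diophantine input: the two bad factors cancel in the product, so $|r_n|\cdot|p_n+q_n+r_n|\le K^2$ unconditionally, and both factors are \emph{nonzero} integers (since $\Phi$ is irreducible over $\Q$ it cannot vanish at the rational endpoints of $J_n$); hence each lies in $[1,K^2]$ and the discriminant relation bounds $p_n$ and $q_n$ exactly as you wrote. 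Either way the set $\{F^n(\alpha)\}$ is finite and uniqueness of irrational itineraries gives eventual periodicity, so the proof is complete.
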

\begin{definition}\label{encodedecode}
Fix an SCFA $\{h_i\}_{i=1}^N$ with BFS $\{\I, F, \{f_i\}_{i=1}^N, \{\Delta_i\}_{i=1}^N\}$. 
\begin{enumerate}[label=({\roman*})]
\item For $(x_n) \in \Omega_N$, let $x$ be the singleton $\cap_{n=1}^{\infty} h_{x_n} \circ h_{x_{n-1}} \circ \cdots \circ h_{x_1}([0,1])$, as in Proposition \ref{lagrange}(i). Define the \emph{decoding map} $\Dec_F: \Omega_N \to [0,1]$ by $\Dec_F((x_n)) =x$.
\item For $x \in \I$, let $(x_n) \in \Omega_N$ be the sequence such that $F^{n-1}(x) \in \Delta_{x_n}$. Define the \emph{encoding map} $\Enc_F: \I \to \Omega_N$ by $\Enc_F(x) = (x_n)$.
\end{enumerate}
\end{definition}
\begin{remark}\label{decodesurj}
Since $\cup_{i=1}^N h_i([0,1]) = [0,1]$, $\Dec_F$ is surjective. The injectivity of $\Enc_F$ is Consequence (i) of Observation 3 in \cite{Panti2}.
\end{remark}
\begin{proposition}\label{subshift}
The encoding map $\Enc_F$ conjugates $\{\I, F, \{f_i\}_{i=1}^N, \{\Delta_i\}_{i=1}^N\}$ to a subshift $\{\Enc_F(\I), \sigma, \{\sigma_i\}_{i=1}^N, \{\sigma_i(\Enc_F(\I))\}_{i=1}^N\}$.
\end{proposition}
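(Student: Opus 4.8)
The plan is to show that $\Enc_F$ is an injective map onto its image $\Enc_F(\I)$ that intertwines the branching function system $\{f_i\}$ with the one-sided shift structure on sequence space, so that the conjugacy of Lemma \ref{eqlemma} applies. First I would verify that $\Enc_F(\I)$ is shift-invariant in the appropriate sense: if $x \in \I$ and $\Enc_F(x) = (x_1, x_2, \dots)$, then by Definition \ref{encodedecode}(ii) we have $F^{n-1}(x) \in \Delta_{x_n}$ for all $n$; since $F(x) \in \I$ as well (because $F$ restricted to each $\Delta_i$ is $f_i^{-1}$, a fractional linear map preserving irrationality), $\Enc_F(F(x))$ is defined and equals $(x_2, x_3, \dots) = \sigma(\Enc_F(x))$, where here $\sigma$ denotes the left shift. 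This shows $\Enc_F \circ F = \sigma \circ \Enc_F$ on $\I$, and in particular that $\Enc_F(\I)$ is closed under $\sigma$ and that the restriction of $\sigma$ to $\Enc_F(\I)$ is well-defined as the decoding map of the subshift.

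Next I would check the intertwining with the inverse branches, which is the heart of the conjugacy. Fix $i \in \N_N$ and $x \in \I$ with $\Enc_F(x) = (x_n)$. I claim $\Enc_F(f_i(x)) = \sigma_i((x_n)) = (i, x_1, x_2, \dots)$. Indeed, $f_i(x) = h_i(x) \in \Delta_i$, so the first symbol of $\Enc_F(f_i(x))$ is $i$; and $F(f_i(x)) = f_i^{-1}(f_i(x)) = x$, so by the shift relation just established, the remaining symbols of $\Enc_F(f_i(x))$ are $\Enc_F(x) = (x_n)$. This gives $\Enc_F \circ f_i = \sigma_i \circ \Enc_F$ for each $i$, which together with the injectivity of $\Enc_F$ (Remark \ref{decodesurj}) and the surjectivity of $\Enc_F$ onto $\Enc_F(\I)$ shows that $\Enc_F$ is the required bijection $C$ in the sense of equation (\ref{conjlemma}). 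It remains to confirm that the target data genuinely is a BFS, i.e.\ that $\{\sigma_i\}_{i=1}^N$ restricted to $\Enc_F(\I)$ has pairwise disjoint ranges covering $\Enc_F(\I)$: disjointness is immediate since $\sigma_i$-images start with distinct symbols, and the covering property follows because every sequence in $\Enc_F(\I)$ has a well-defined first symbol $i$ and, by the shift-invariance above, its tail lies in $\Enc_F(\I)$, hence it lies in $\sigma_i(\Enc_F(\I))$.

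The main obstacle I anticipate is bookkeeping around the fact that $\Enc_F$ need not be surjective onto all of $\Omega_N$ and that the $\Delta_i$ are half-open-ish (they omit the rational endpoints), so one must be careful that $F(x) \in \I$ always holds and that $\Enc_F(\I)$ is exactly the set of sequences $(x_n)$ for which $\bigcap_n h_{x_n} \circ \cdots \circ h_{x_1}([0,1])$ is an irrational singleton — this identification of $\Enc_F(\I)$ with a concretely described subshift is really a consequence of Proposition \ref{lagrange}(i) and Remark \ref{decodesurj} rather than something new, so I would cite those rather than reprove them. Once the two intertwining identities $\Enc_F \circ f_i = \sigma_i \circ \Enc_F$ are in hand, the statement is just an instance of the definition of conjugacy of branching function systems, and no further computation is needed.
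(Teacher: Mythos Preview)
Your proof is correct and follows exactly the paper's approach: the paper's argument consists solely of citing the injectivity of $\Enc_F$ and stating the key intertwining relation $\Enc_F \circ f_i = \sigma_i \circ \Enc_F$, which is precisely your equation in the second paragraph. Your write-up simply fills in the verification of that identity and the auxiliary checks (shift-invariance of the image, BFS structure on the target) that the paper leaves implicit.
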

\begin{proof} Since $\Enc_F$ is injective, it is enough to observe that 
	\begin{equation}\label{critobs}
	\Enc_F \circ f_i = \sigma_i \circ \Enc_F \qquad  i \in \{ 1, \dots, N\}.
\end{equation}\end{proof}
The encoding map in the above proposition is never surjective; its image is always $\Omega_N$ minus the  itineraries of rational numbers. For example, denoting the constant sequences by $\overline{1} = 1,1,1, \dots$ and $\overline{2} = 2,2,2, \dots$,
		\begin{eqnarray}
		\Enc_{F_R}(\I) = \Omega_2 \setminus \{(x_n) :(x_n) \sim \overline{1}\},\\
		\Enc_{F_B}(\I) = \Omega_2 \setminus \{(x_n) : (x_n) \sim \overline{2} \text{ or } (x_n) \sim \overline{1}\}.
		\end{eqnarray}	
	
We conclude this section by establishing a strong version of the Serret theorem which includes rational numbers for a specific family of SCFAs which we denote $F_N$.
\begin{definition}\label{fndef}
For $2 \leq N < \infty$ let $F_N$ to be the SCFA associated with the unimodular partition $[0, \frac{1}{N}], [\frac{1}{N}, \frac{1}{N-1}], [\frac{1}{N-1}, \frac{1}{N-2}], \dots, [\frac{1}{2}, \frac{1}{1}]$ and  signs $1, -1, -1, \dots, -1$.
\end{definition}
\begin{proposition}\label{fnprop}$\ $
\begin{enumerate}[label={\upshape(\roman*)}]
\item The subgroup $\Sigma_{F_N}$ of $\PGL_2(\Z)$ generated by  $\{h_i\}_{i=1}^N$ coincides with $\PGL_2(\Z)$.
\item The Serret theorem holds for $F_N$, in the sense of Definition \ref{itinerary}(iv).
\item A sequence $(x_n) \in \Omega_N$ is an $F_N$-itinerary of a rational number if and only if $(x_n) \sim \overline{1} = 1,1,1, \dots$.
\end{enumerate}
\end{proposition}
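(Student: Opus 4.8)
The plan is to prove (i) first, then (iii), and finally (ii), since (ii) uses (i) to identify $\Sigma_{F_N}$ with $\PGL_2(\Z)$ and becomes transparent only once the symbolic behaviour of rationals is understood. For (i): by formula (\ref{matrixformula}) the partition $[0,\tfrac1N],[\tfrac1N,\tfrac1{N-1}],\dots,[\tfrac12,1]$ with signs $1,-1,\dots,-1$ gives the branches
\[
h_1=\begin{bmatrix}1&0\\N-1&1\end{bmatrix},\qquad h_j=\begin{bmatrix}0&1\\1&N+1-j\end{bmatrix}\quad(2\le j\le N).
\]
A short computation gives $h_1^{-1}h_2=\left[\begin{smallmatrix}0&1\\1&0\end{smallmatrix}\right]$ and $(h_1^{-1}h_2)h_N=\left[\begin{smallmatrix}1&1\\0&1\end{smallmatrix}\right]$, both in $\Sigma_{F_N}$. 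Conjugating the latter by the former produces $\left[\begin{smallmatrix}1&0\\1&1\end{smallmatrix}\right]$, so $\Sigma_{F_N}$ contains $\left[\begin{smallmatrix}1&1\\0&1\end{smallmatrix}\right]$ and $\left[\begin{smallmatrix}1&0\\1&1\end{smallmatrix}\right]$ (which generate $\mathrm{SL}_2(\Z)$) together with the determinant $-1$ matrix $\left[\begin{smallmatrix}0&1\\1&0\end{smallmatrix}\right]$; hence $\Sigma_{F_N}=\PGL_2(\Z)$, the reverse inclusion being clear since each $h_i$ has determinant $\pm1$.

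For (iii): given $(x_n)\in\Omega_N$ set $y_0:=\Dec_{F_N}((x_n))$ and $y_k:=\Dec_{F_N}(\sigma^k(x_n))$; from $\Dec_{F_N}(s)=h_{s_1}(\Dec_{F_N}(\sigma s))$ one has $y_k=h_{x_k}^{-1}(y_{k-1})\in[0,1]$. Suppose $y_0\in\Q$, so every $y_k\in[0,1]\cap\Q$; write $y_k=p_k/q_k$ in lowest terms. Since elements of $\mathrm{GL}_2(\Z)$ carry reduced fractions to reduced fractions, the branch formulas $h_1^{-1}(y)=\tfrac{y}{1-(N-1)y}$ and $h_j^{-1}(y)=\tfrac1y-(N+1-j)$ yield $q_k=q_{k-1}-(N-1)p_{k-1}$ when $x_k=1$ and $q_k=p_{k-1}$ when $x_k\ge2$; in either case $q_k\le q_{k-1}$. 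Hence $q_k$ is eventually a constant $Q$, and inspecting a transition with $q_{k-1}=q_k=Q$ forces $Q=1$: if $x_k=1$ then $p_{k-1}=0$, so $y_{k-1}=0$; if $x_k\ge2$ then $p_{k-1}=q_{k-1}$, so $y_{k-1}=1$; either way $Q=1$. Thus $y_k\in\{0,1\}$ for all large $k$; since $1\in h_i([0,1])$ only for $i=N$ with $h_N^{-1}(1)=0$, and $0\in h_i([0,1])$ only for $i=1$ with $h_1^{-1}(0)=0$, we get $y_k=0$ and $x_n=1$ for all large $n$, i.e.\ $(x_n)\sim\overline{1}$. Conversely $h_1^n([0,1])=[0,\tfrac1{n(N-1)+1}]$ shrinks to $\{0\}$, so $\Dec_{F_N}(\overline{1})=0$ and $\Dec_{F_N}(w\overline{1})=h_w(0)\in\Q$ for every finite word $w$, so every sequence tail-equivalent to $\overline{1}$ decodes to a rational.

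For (ii): by (i) it suffices to show $\sim_{F_N}$ agrees with $\sim_{\PGL_2(\Z)}$ on $\I$. An irrational $x$ has exactly one $F_N$-itinerary, namely $\Enc_{F_N}(x)$ (distinct $h_i([0,1])$ overlap only in rational points, so the first symbol is forced, then one iterates), so $x\sim_{F_N}y$ means $\Enc_{F_N}(x)\sim\Enc_{F_N}(y)$. One implication holds in general: if these itineraries coincide after forward shifts, applying $\Dec_{F_N}$ to the common tail (using $\Dec_{F_N}(\sigma s)=h_{s_1}^{-1}(\Dec_{F_N}(s))$) gives $A(x)=B(y)$ with $A,B$ finite products of the $h_i^{-1}$, so $y=B^{-1}A(x)$ with $B^{-1}A\in\Sigma_{F_N}=\PGL_2(\Z)$. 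For the converse the plan is to set up a block recoding: for irrational $x=[a_1,a_2,\dots]$, tracking $F_N$ on $[a_1,a_2,\dots]$ (on $[\tfrac1{a+1},\tfrac1a]$ with $a\le N-1$ it performs one Gauss step, and on $[0,\tfrac1N]$ it subtracts $N-1$ from the leading quotient) shows $\Enc_{F_N}(x)=B(a_1)B(a_2)\cdots$, the concatenation of blocks $B(a)=1^{(a-r)/(N-1)}(N+1-r)$ where $r\in\{1,\dots,N-1\}$ with $r\equiv a\pmod{N-1}$. Since this recoding merely concatenates finite blocks, tail-equivalent regular continued fractions map to tail-equivalent $F_N$-itineraries; together with Serret's theorem (Proposition \ref{wellknown}(iii)) this gives $x\sim_{\PGL_2(\Z)}y\Rightarrow\Enc_{F_N}(x)\sim\Enc_{F_N}(y)\Rightarrow x\sim_{F_N}y$.

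The hard part will be the recoding in (ii): one must check that $F_N$ consumes the regular continued fraction one partial quotient at a time, emitting $B(a_j)$ before modifying $a_{j+1}$, so that the recoding is genuinely block-by-block and hence respects the tail relation (and, reading it backwards, that tail-equivalent $F_N$-itineraries eventually split into identical blocks). One could instead deduce (ii) from the practical Serret criterion of \cite{Panti}; the reduced-fraction bookkeeping in (iii) is a lesser point that still needs care.
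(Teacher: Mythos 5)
Your proof is correct, but it diverges from the paper's in parts (ii) and (iii). For (i) both arguments are essentially the same reduction to standard generators: the paper observes that the branches of $F_N$ are the words $r_1^{N-1}, r_1^{N-2}r_2,\dots,r_1r_2,r_2$ in the two Farey branches (so the group they generate contains $r_1$ and $r_2$, which generate $\PGL_2(\Z)$), while you compute the matrices explicitly and extract $\left[\begin{smallmatrix}0&1\\1&0\end{smallmatrix}\right]$ and the two elementary unipotents; both are fine. For (iii) the paper proves a more general statement (Lemma \ref{rationallemma}, valid for any SCFA with $\epsilon_1=1$, $\epsilon_N=-1$) by showing a rational point is eventually an endpoint of the nested intervals and then tracking the sign of the determinant to force the symbol $1$; your denominator-descent argument ($q_k$ nonincreasing, stabilization forces $Q=1$, hence $y_k\in\{0,1\}$ and eventually $y_k=0$) is a clean, self-contained alternative that is specific to $F_N$ but entirely sufficient for the proposition. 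The real difference is in (ii): the paper invokes Panti's transducer criterion (Lemma 5.5 and Corollary 5.6 of \cite{Panti}), building the finite-state machine with states $b_1^kT^e$ and checking that every infinite path ends in a self-loop with input equal to output; you instead prove the easy inclusion by the general matrix argument and the hard inclusion by exhibiting the block recoding $\Enc_{F_N}(x)=B(a_1)B(a_2)\cdots$ with $B(a)=1^{(a-r)/(N-1)}(N{+}1{-}r)$ from the regular continued fraction, then citing the classical Serret theorem (Proposition \ref{wellknown}(iii)). Your block formula checks out (on $[0,1/N]$ the map $F_N$ sends $[a_1,a_2,\dots]$ to $[a_1-(N-1),a_2,\dots]$, and on $[\frac{1}{a+1},\frac1a]$ with $a\le N-1$ it performs a full Gauss step emitting symbol $N{+}1{-}a$), and since the recoding is a concatenation of finite nonempty blocks it manifestly preserves tail equivalence in the direction you need. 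The transducer approach buys uniformity (it is the tool Panti designed to handle arbitrary SCFAs, including ones where the Serret theorem fails), whereas your recoding buys transparency and avoids importing the machinery of \cite{Panti} at the cost of being special to the family $F_N$. One small remark: your closing worry about "reading the recoding backwards" is unnecessary — the implication $\sim_{F_N}\Rightarrow\sim_{\PGL_2(\Z)}$ is already covered by your general $A(x)=B(y)$ argument, so the recoding only ever needs to be read forwards.
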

\begin{cor}\label{strongserret} Let $x, y \in [0,1]$ and $(x_n), (y_n) \in \Omega_N$ such that $\Dec_{F_N}((x_n)) =x$ and $\Dec_{F_N}((y_n)) =y$. Then $(x_n) \sim (y_n)$ if and only if $x \sim_{\PGL_2(\Z)} y$ . 
\end{cor}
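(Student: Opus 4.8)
The plan is to reduce Corollary \ref{strongserret} to Proposition \ref{fnprop} by carefully handling rational and irrational numbers separately. First I would dispose of the case where $x$ is irrational: then by Proposition \ref{fnprop}(iii) neither $(x_n)$ nor $(y_n)$ is tail equivalent to $\overline 1$, so by Proposition \ref{fnprop}(iii) again $y$ is irrational as well, and both sequences are the genuine encodings $\Enc_{F_N}(x)$ and $\Enc_{F_N}(y)$ (here one needs that for irrational $x$ the decoding is a two-sided inverse of encoding, which follows from Proposition \ref{lagrange}(i) together with Remark \ref{decodesurj}, since the defining intersection in Definition \ref{itinerary}(i) is a singleton). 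Then $(x_n) \sim (y_n)$ means exactly that $x$ and $y$ have tail-equivalent $F_N$-itineraries, i.e. $x \sim_{F_N} y$, which by Proposition \ref{fnprop}(ii) (the Serret theorem for $F_N$) is equivalent to $x \sim_{\Sigma_{F_N}} y$, and by Proposition \ref{fnprop}(i) this is $x \sim_{\PGL_2(\Z)} y$.

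It remains to treat the case where $x$ is rational. By Proposition \ref{fnprop}(iii), $\Dec_{F_N}((x_n)) = x$ is rational if and only if $(x_n) \sim \overline 1$. So if $x$ is rational then $(x_n) \sim \overline 1$; and $(x_n) \sim (y_n)$ holds if and only if $(y_n) \sim \overline 1$, which holds if and only if $y = \Dec_{F_N}((y_n))$ is rational. On the other hand, all rationals in $[0,1]$ form a single $\PGL_2(\Z)$-orbit (for instance $\left[\begin{smallmatrix} a & b \\ c & d\end{smallmatrix}\right]$ with $ad - bc = \pm 1$ sends $0$ to $b/d$, so every reduced fraction is $\PGL_2(\Z)$-equivalent to $0$, using that $\gcd(b,d)=1$ lets one complete the matrix), so $x \sim_{\PGL_2(\Z)} y$ holds for rational $x$ precisely when $y$ is also rational. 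Combining, for rational $x$ both sides of the asserted equivalence are equivalent to ``$y$ is rational,'' and we are done; the symmetric case $y$ rational, $x$ irrational cannot occur by what was just shown.

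I expect the main subtlety to be the bookkeeping in the irrational case: one must confirm that when $\Dec_{F_N}((x_n)) = x$ with $x$ irrational, the sequence $(x_n)$ genuinely equals $\Enc_{F_N}(x)$ rather than merely decoding to $x$. This is where Proposition \ref{lagrange}(i) does the work — the intersection $\cap_n h_{x_n}\circ\cdots\circ h_{x_1}([0,1])$ is always a singleton, and for an irrational point that singleton's itinerary is uniquely determined by which $\Delta_i$ the forward orbit lands in, matching Definition \ref{encodedecode}(ii); the two-itinerary ambiguity only afflicts rationals, which sit on the shared endpoints of the $\Delta_i$. Once this identification is in place, the chain of equivalences $(x_n)\sim(y_n) \iff x\sim_{F_N} y \iff x\sim_{\Sigma_{F_N}} y \iff x\sim_{\PGL_2(\Z)} y$ is immediate from Proposition \ref{fnprop}. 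Everything else is elementary.
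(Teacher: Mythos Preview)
Your derivation is the intended one: the paper states Corollary~\ref{strongserret} without proof, as an immediate consequence of Proposition~\ref{fnprop}, and your argument unpacks exactly that. The ingredients---uniqueness of itineraries for irrationals, the Serret theorem (ii) combined with $\Sigma_{F_N}=\PGL_2(\Z)$ from (i), the characterization (iii) of rational decodings, and the fact that all rationals lie in a single $\PGL_2(\Z)$-orbit---are all correct and are all that is needed.

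There is, however, a slip in your case structure. From ``$x$ is irrational'' alone you cannot conclude that $(y_n)\not\sim\overline 1$; that requires $y$ irrational, which is not yet known. The clean split is into three cases: both $x,y$ irrational (your chain of equivalences via Proposition~\ref{fnprop}(i),(ii) applies); both rational (both sides of the equivalence are true, by (iii) and the single-orbit fact); and exactly one rational (both sides are false, since $\PGL_2(\Z)$ maps rationals to rationals and, by (iii), tail equivalence to $\overline 1$ detects rationality of the decoding). Your final sentence that the mixed case ``cannot occur'' is not right---it can occur as a hypothesis, it is just that both sides are then false; replace it with the observation just given, or with ``is handled by the symmetric argument.'' With that correction the proof is complete.
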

\begin{proof}[Proof of Proposition \ref{fnprop}]$\ $
\begin{enumerate}[label={\upshape(\roman*)}]
\item Note that $F_2$ is simply the Farey map $F_R$. Denote the inverse branches of $F_R$ by $r_1$ and $r_2$. The inverse branches of $F_N$ are $r_1^{N-1}$, $r_1^{N-2}r_2$, $r_1^{N-3}r_2$, $\dots$, $r_1^2r_2$,  $r_1r_2$, $r_2$. Proposition \ref{fnprop}(i) then follows from the fact that $\{ r_1, r_2\}$ generates $\PGL_2(\Z)$.
\item We construct the transducer (finite state automaton) considered in Lemma 5.5 of \cite{Panti}. Let $b_1$ and $b_2$ be the branches of $F_B$ and $T(x) = 1-x$, as in Lemma \ref{unimodularstructure}. Given $h \in \{h_i\}_{i =1}^{N}$ and $v \in V$, there is a unique $w \in V$ for which there exists a (possibly empty) word $\mu = \mu_1\mu_2\cdots \mu_{n}$ in the alphabet $\{1, ... N\}$ such that \begin{equation}\label{edge}
vh = h_{\mu_1} h_{\mu_2} \cdots h_{\mu_n} w.
\end{equation}

The transducer in question has state set $V =\{b_1^kT^{e} : 0\leq k \leq N-2,e \in \{0, 1\}\}$. For $v$ and $w$ as in \ref{edge}, it has a directed edge from $v$ to $w$ labeled with input $h$ and output $h_{\mu_1} h_{\mu_2} \cdots h_{\mu_n}$. To construct the edge set, we consider cases:
\begin{enumerate}[label=({\alph*})]
\item If $v = b_1^k$ and $h = b_1^{N-1}$: We obtain a self loop, labeled with both input and output $h$.
\item If $v = b_1^k$ and $h = b_1^{j}b_2T$: We obtain an edge to $1$. If $k=0$ (i.e., $v=1$) this self loop is labeled with both input and output $h$.
\item If $v = b_1^kT$ and $h = b_1^{N-1}$ : We obtain an edge to $b_1^{N-2}$.
\item If $v = b_1^kT$ and  $h = b_1^{j}b_2T$ for $j \neq 0 $: We obtain an edge to $1$.
\item  If $v = b_1^kT$ and $h =b_2T$, for $ k \neq  N-2$: We obtain an edge to $b_1^{k+1}$.
\item  If $v = b_1^{N-2}T$ and  $h =b_2T$: We obtain an edge to $1$.
\end{enumerate}

An infinite path in the transducer constructed above eventually consists of an infinitely repeated self loop, labeled with the same input as output. Hence the output of the transducer is always tail equivalent to its input. By Corollary 5.6 of \cite{Panti}, the Serret theorem holds.
\item This is a special case of the following Lemma \ref{rationallemma}.
\end{enumerate}		
\end{proof}	
\begin{lemma}\label{rationallemma}
Suppose the SCFA $\{h_i\}_{i=1}^N$ satisfies $\epsilon_1 =1$ and $\epsilon_N = -1$. (Recall our ordering convention that $0 \in h_1([0,1])$ and $1 \in h_N([0,1])$.)  A sequence $(x_n) \in \Omega_N$ is an itinerary of a rational number if and only if $(x_n) \sim \overline{1} = 1,1,1, \dots$.
\end{lemma}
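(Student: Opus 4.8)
The plan is to track, along the itinerary $(x_n)$, the nested sequence of unimodular intervals cut out by the decoding map, together with the denominators of their endpoints. Throughout I use that, by Definition~\ref{itinerary}(i) and Proposition~\ref{lagrange}(i), $(x_n)$ is the itinerary of exactly one number, namely $\Dec_F((x_n))$, so that ``$(x_n)$ is an itinerary of a rational number'' is the same as $\Dec_F((x_n))\in\Q$.

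For the implication $(x_n)\sim\overline{1}\Rightarrow\Dec_F((x_n))\in\Q$: the hypothesis $\epsilon_1=1$ together with the ordering convention forces $h_1$ to be increasing with $h_1(0)=0$, so $0\in h_1^n([0,1])$ for every $n$ and hence, by Proposition~\ref{lagrange}(i), $\bigcap_n h_1^n([0,1])=\{0\}$. If $x_n=1$ for all $n\geq m$, then, because the homeomorphism $h_{x_1}\circ\cdots\circ h_{x_{m-1}}$ of $[0,1]$ onto its image commutes with nested intersections, $\Dec_F((x_n))=h_{x_1}\circ\cdots\circ h_{x_{m-1}}(0)$, which is the image of $0$ under an element of $\PGL_2(\Z)$, hence rational.

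For the converse, assume $\Dec_F((x_n))=p/q$ in lowest terms. Put $\gamma_n:=h_{x_1}\circ\cdots\circ h_{x_n}\in\PGL_2(\Z)$, $J_n:=\gamma_n([0,1])$ and $y_n:=\gamma_n^{-1}(p/q)\in[0,1]$; then $J_{n+1}=\gamma_n(\Delta_{x_{n+1}})$ is a proper subinterval of $J_n$ (since $N\geq2$), $\bigcap_nJ_n=\{p/q\}$, $y_{n-1}=h_{x_n}(y_n)$, and each $J_n$ is unimodular because its endpoints $\gamma_n(0),\gamma_n(1)$ are reduced fractions that are Farey-adjacent (as $\gamma_n\in\PGL_2(\Z)$). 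A reduced fraction in the interior of a unimodular interval $[\tfrac cd,\tfrac{c'}{d'}]$ has denominator $\geq d+d'$; since each $J_{n+1}$ has an endpoint in the interior of $J_n$, the larger of the two endpoint-denominators of $J_n$ strictly increases with $n$, hence exceeds $q$ for all $n\geq n_0$. For such $n$ the fraction $p/q\in J_n$ cannot be interior to $J_n$, so it is an endpoint of $J_n$ and $y_n=\gamma_n^{-1}(p/q)\in\{0,1\}$. Finally, $\epsilon_1=1$, $\epsilon_N=-1$ and the ordering convention give $h_i(0)=0\iff i=1$, $h_i(0)=1\iff i=N$ and $h_i(1)\notin\{0,1\}$ for every $i$; inserting $y_n\in\{0,1\}$ into $y_{n-1}=h_{x_n}(y_n)$ shows that once some $y_n=0$ one has $x_{n+1}=1$, $y_{n+1}=0$, and inductively $x_k=1$ for all $k>n$, while if $y_{n_0}=1$ then $x_{n_0+1}=N$ and $y_{n_0+1}=0$, which reduces to the previous case. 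In either case $x_n=1$ for all large $n$, i.e.\ $(x_n)\sim\overline{1}$.

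The step I expect to be the main obstacle is showing that the fixed rational $p/q$, trapped in the shrinking intervals $J_n$, eventually becomes an \emph{endpoint} of $J_n$ (equivalently, $y_n\in\{0,1\}$). This combines the elementary bound on denominators of reduced fractions inside a unimodular interval with the divergence of the larger endpoint-denominator of $J_n$; the latter I obtain from the observation that $J_{n+1}$, being a proper nondegenerate subinterval of the unimodular interval $J_n$, must have an endpoint in the interior of $J_n$ (alternatively one may invoke $\lvert J_n\rvert\to0$ from Proposition~\ref{lagrange}(i)). Once this is in place, identifying the values of $h_i(0)$ and $h_i(1)$ from the ordering convention and the signs, and unwinding the recursion $y_{n-1}=h_{x_n}(y_n)$, is routine.
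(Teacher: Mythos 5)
Your proof is correct and follows essentially the same strategy as the paper's: show that the rational label is eventually an endpoint of the nested unimodular intervals $J_n$, and then use $\epsilon_1=1$, $\epsilon_N=-1$ to force the subsequent symbols to be $1$ (after at most one occurrence of $N$). The only real difference is that you reprove the ``eventually an endpoint'' step via the mediant/denominator-growth bound for unimodular intervals, whereas the paper simply cites the proof of Observation 3 in Panti's \emph{A General Lagrange Theorem}; your bookkeeping with $y_n=\gamma_n^{-1}(p/q)\in\{0,1\}$ is an equivalent reformulation of the paper's tracking of the shared endpoint and the determinant of the composite.
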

\begin{proof}
By the proof of Observation $3$ in \cite{Panti2}, for $r \in \Q$ and $(x_n) \in \Omega_N$, there is $M \in \N$ such that $r$ is not in the topological interior of $h_{x_m} \circ \cdots \circ h_{x_1}([0,1])$ for $m \geq M$. If $(x_n)$ is an itinerary for $x \in \Q \cap [0,1]$, this implies that $x$ is an endpoint of $h_{x_m} \circ \cdots \circ h_{x_1}([0,1])$ for $m \geq M$. If the determinant of $h_{x_m} \circ \cdots \circ h_{x_1}$ is $1$, then $h_{x_m}  \circ \cdots \circ h_{x_1}([0,1])$ shares its right endpoint with $h_{N} \circ h_{x_{m}} \circ \cdots \circ h_{x_1}([0,1])$ and left endpoint with $h_{1} \circ h_{x_{m}} \circ \cdots \circ h_{x_1}([0,1])$. If the determinant is $-1$, the situation is reversed. In this way, the shared endpoint and the determinant of $h_{x_m} \circ \cdots \circ h_{x_1}$ inductively determine $x_{m+1}$.

If $x_m = N$, the determinants of $h_{x_m} \circ \cdots \circ h_{x_1}$ and $h_{x_{m-1}} \circ \cdots \circ h_{x_1}$ differ by the assumption $\det(h_N) =-1$. By the above, $x_{m+1} = 1$.  If $x_m =1$, the determinants of $h_{x_m} \circ \cdots \circ h_{x_1}$ and $h_{x_{m-1}} \circ \cdots \circ h_{x_1}$ coincide by the assumption $\det(h_1) =1$. Again, $x_{m+1} =1$. We conclude that $x_m = 1$ for $m > M$, proving the forward implication.

 Conversely, if $(x_n)$ is tail equivalent to $\overline{1}$, then for $n$ and $m$ sufficiently large, the intervals $h_{x_n} \circ \cdots \circ h_{x_1}([0,1])$ and $h_{x_m} \circ \cdots \circ h_{x_1}([0,1])$ share a rational endpoint. The common endpoint is therefore the unique point in the intersection $\cap_{n=1}^{\infty}  h_{x_n} \circ  \cdots \circ h_{x_1}([0,1])$. We conclude that $(x_n)$ is the itinerary of a rational number, proving the lemma.
\end{proof}
\section{Main Results}\label{section4}
In this section we state and prove our main results. We begin with the following corollary of Lemma \ref{eqlemma} and Propositions \ref{universality} and \ref{subshift}.
\begin{proposition}\label{rep} Let $\{\I, F, \{f_i\}_{i=1}^N, \{\Delta_i\}_{i=1}^N\}$ be the BFS associated to an SCFA and $\pi_F$ the permutative representation of $\OO_N$  on $\ell^2(\I)$ in Proposition \ref{BFS}. Then the irreducible decomposition of $\pi_F$ is given as
\begin{equation}\label{fdecomp}
\ell^2(\I) = \bigoplus_{[x]_F \in \I / \sim_F} \mathcal{H}_{[x]_F}
\end{equation}
where $\mathcal{H}_{[x]_F}$ is the subspace of $\ell^2(\I)$ with basis $\{e_y : y \sim_F x \}$.
	\end{proposition}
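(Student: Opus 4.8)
The plan is to transport the whole picture along the encoding map and then invoke Proposition \ref{universality}. Let $U\colon \ell^2(\I)\to\ell^2(\Enc_F(\I))$ be the unitary $Ue_x=e_{\Enc_F(x)}$, where $\ell^2(\Enc_F(\I))$ is regarded as the closed subspace of $\ell^2(\Omega_N)$ spanned by $\{e_{(x_n)}:(x_n)\in\Enc_F(\I)\}$. By Proposition \ref{subshift} the encoding map conjugates the BFS of $F$ to the subshift BFS $\{\Enc_F(\I),\sigma,\{\sigma_i\}_{i=1}^N,\{\sigma_i(\Enc_F(\I))\}_{i=1}^N\}$, so by Lemma \ref{eqlemma} the representation $\pi_F$ is unitarily equivalent via $U$ to the permutative representation $\pi_\sigma$ of $\OO_N$ attached to that subshift by Proposition \ref{BFS}. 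The first thing I would check is that $\pi_\sigma$ is exactly the restriction of the shift representation $\pi_S^N$ to $\ell^2(\Enc_F(\I))$: the two agree on basis vectors (both send $S_i$ to ``prepend $i$''), and $\ell^2(\Enc_F(\I))$ is invariant because $\Enc_F(\I)$ is carried into itself by each $\sigma_i$ and by the shift $\sigma$ --- immediate from the conjugacy, since $\sigma_i(\Enc_F(\I))=\Enc_F(\Delta_i)\subseteq\Enc_F(\I)$ and $\sigma(\Enc_F(\I))=\Enc_F(F(\I))=\Enc_F(\I)$.

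Next I would show that $\Enc_F(\I)$ is a union of tail-equivalence classes of $\Omega_N$. Indeed, if $(x_n)\in\Enc_F(\I)$ and $(y_n)\sim(x_n)$, then $(y_n)$ is obtained from a suitable shift $\sigma^k((x_n))$, which lies in $\Enc_F(\I)$, by prepending finitely many letters, i.e. by applying a composition of the maps $\sigma_i$; since each $\sigma_i$ preserves $\Enc_F(\I)$, we get $(y_n)\in\Enc_F(\I)$. Consequently the decomposition of $\ell^2(\Omega_N)$ into the cycle subspaces $\mathcal H_{[(x_n)]}$ supplied by Proposition \ref{universality}(i) restricts to an orthogonal decomposition
\begin{equation*}
\ell^2(\Enc_F(\I))=\bigoplus_{[(x_n)]\in\Enc_F(\I)/\sim}\mathcal H_{[(x_n)]}.
\end{equation*}
Each summand here is already contained in $\ell^2(\Enc_F(\I))$, so the restriction of $\pi_\sigma$ to it coincides with the restriction of $\pi_S^N$, which is a cycle by Proposition \ref{universality}(i); thus the displayed formula is the irreducible decomposition of $\pi_\sigma$.

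Finally I would pull this decomposition back along $U$. For an irrational $x$, membership in $\bigcap_n h_{x_n}\circ\cdots\circ h_{x_1}([0,1])$ forces $F^{n-1}(x)\in\Delta_{x_n}$, so $x$ has a unique $F$-itinerary, namely $\Enc_F(x)$; and since $\Enc_F(\I)$ is tail-closed and, as recorded after Proposition \ref{subshift}, contains no itinerary of a rational number, no point of $\I$ is $\sim_F$-equivalent to a rational and no $\sim_F$-class meeting $\I$ escapes $\I$. Hence for $x,y\in\I$ we have $x\sim_F y$ if and only if $\Enc_F(x)\sim\Enc_F(y)$; the assignment $[x]_F\mapsto[\Enc_F(x)]$ is a well-defined bijection $\I/\sim_F\;\to\;\Enc_F(\I)/\sim$; and $U^*\mathcal H_{[\Enc_F(x)]}$ is the span of $\{e_y:y\in\I,\ y\sim_F x\}=\mathcal H_{[x]_F}$. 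Applying $U^*$ to the displayed decomposition then yields (\ref{fdecomp}).

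The genuinely new content, and the step I would spend the most care on, is this last paragraph: the relation $\sim_F$ is defined on all of $\R$, and a priori the two itineraries of a rational could link it across the rationals and irrationals, so one must verify that on $\I$ it restricts to exactly the tail relation transported by the bijection $\Enc_F$. Everything else is bookkeeping around the invariance properties of $\Enc_F(\I)$ together with direct citations of Lemma \ref{eqlemma} and Propositions \ref{subshift} and \ref{universality}, which is why the statement is phrased as a corollary of them.
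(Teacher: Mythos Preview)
Your proposal is correct and follows exactly the line the paper indicates: the proposition is stated there without proof, merely as ``the following corollary of Lemma \ref{eqlemma} and Propositions \ref{universality} and \ref{subshift},'' and your argument unpacks precisely that chain --- conjugate via $\Enc_F$ (Proposition \ref{subshift} and Lemma \ref{eqlemma}), identify the result with a restriction of $\pi_S^N$, and read off the cycle decomposition from Proposition \ref{universality}. Your additional care in the last paragraph, verifying that $\Enc_F(\I)$ is tail-closed and that no irrational is $\sim_F$-related to a rational, fills in details the paper leaves implicit in its one-line remark after Proposition \ref{subshift}; this is a genuine point worth spelling out, since without it the identification of $\sim_F$-classes in $\I$ with tail classes in $\Enc_F(\I)$ would not be fully justified.
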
 
	\begin{remark}
	If the SCFA satisfies the Serret theorem as in Definition \ref{itinerary}(iv), then the sets $[x]_F$ coincide with $\Sigma_F$-orbits of irrational numbers.
	\end{remark}
	We now consider the SCFAs $F_N$ introduced in Definition \ref{fndef} to produce a bijection between equivalence classes of irreducible permutative representations of $\OO_N$ and $\PGL_2(\Z)$-equivalence classes of real numbers. 
	\begin{theorem}\label{mr}
For $2 \leq N < \infty$, the decoding map $\Dec_{F_N}$ in Definition \ref{encodedecode}(i) provides a bijection between unitary equivalence classes of irreducible permutative representations of $\OO_N$ and $\PGL_2(\Z)$-equivalence classes of real numbers. Moreover, an equivalence class of representations corresponds to an equivalence class of solutions to quadratic equations with integer coefficients if and only if it has finitely many atoms.
\end{theorem}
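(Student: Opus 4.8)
The plan is to assemble Theorem \ref{mr} from three ingredients already in place: the irreducible decomposition of $\pi_{F_N}$ from Proposition \ref{rep}, the classification of irreducible permutative representations of $\OO_N$ from Proposition \ref{universality}(ii), and the strong Serret theorem of Corollary \ref{strongserret}. The key conceptual point is that, unlike the $N=\infty$ case where the encoding map is a bijection between $\I$ and the full shift, here $\Enc_{F_N}$ is only an injection of $\I$ into $\Omega_N$, missing exactly the tail class of $\overline{1}$ by Proposition \ref{fnprop}(iii). So I must account for the rational numbers by working with the decoding map $\Dec_{F_N}$, which \emph{is} surjective onto $[0,1]$ (Remark \ref{decodesurj}), rather than with $\Enc_{F_N}$.

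\textbf{Construction of the bijection.} First I recall that by Proposition \ref{universality}(ii), unitary equivalence classes of irreducible permutative representations of $\OO_N$ are in bijection with tail equivalence classes in $\Omega_N/{\sim}$. So it suffices to produce a bijection between $\Omega_N/{\sim}$ and $\{[x]_{\PGL_2(\Z)} : x \in \R\}$ and to check it is implemented by $\Dec_{F_N}$. I would argue in three steps. (1) $\Dec_{F_N}$ descends to a well-defined map $\Omega_N/{\sim} \to \{[x]_{\PGL_2(\Z)}: x\in[0,1]\}$: this is exactly the ``only if'' direction of Corollary \ref{strongserret}, namely $(x_n)\sim(y_n) \Rightarrow \Dec_{F_N}((x_n))\sim_{\PGL_2(\Z)}\Dec_{F_N}((y_n))$. (2) The descended map is injective: this is the ``if'' direction of Corollary \ref{strongserret} — if $\Dec_{F_N}((x_n))\sim_{\PGL_2(\Z)}\Dec_{F_N}((y_n))$ then $(x_n)\sim(y_n)$. (3) The descended map is surjective onto $\PGL_2(\Z)$-classes of \emph{all} real numbers: here I use that every real $x$ is $\PGL_2(\Z)$-equivalent to a point of $[0,1]$ (e.g. $x\mapsto 1/(x+1)$-type translations and inversions move any real into $[0,1]$, or simply note $-1/x$ and $x\mapsto x-n$ generate enough of $\PGL_2(\Z)$ to do this), and that $\Dec_{F_N}$ is surjective onto $[0,1]$ by Remark \ref{decodesurj}. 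Combining (1)–(3) with Proposition \ref{universality}(ii) gives the bijection, and unwinding the conjugacy of Proposition \ref{subshift} / Lemma \ref{eqlemma} shows the correspondence is literally given by $\Dec_{F_N}$ on the level of labels: the cycle $\mathcal{H}_{[(x_n)]}$ corresponds to $[\Dec_{F_N}((x_n))]_{\PGL_2(\Z)}$.

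\textbf{The finitely-many-atoms statement.} By Proposition \ref{universality}(iii), the atoms of a cycle $\mathcal{H}_{[(x_n)]}$ are indexed by the eventual equivalence classes $[[(y_n)]]$ contained in the tail class $[(x_n)]$, so the cycle has finitely many atoms iff the tail class $[(x_n)]$ splits into finitely many eventual classes iff $(x_n)$ is eventually periodic. I would then translate ``$(x_n)$ eventually periodic'' into a statement about $\Dec_{F_N}((x_n))$. For irrational labels this is precisely the Generalized Lagrange Theorem, Proposition \ref{lagrange}(ii): an irrational number has eventually periodic $F_N$-itinerary iff it is a quadratic irrational. For rational labels, $\Dec_{F_N}((x_n))$ rational forces $(x_n)\sim\overline{1}$ by Proposition \ref{fnprop}(iii), which is certainly eventually periodic, so the cycle has finitely many atoms — and rationals are (degenerate) solutions of quadratic equations with integer coefficients, so they fall on the correct side of the dichotomy; moreover there is a unique such class (the $\PGL_2(\Z)$-orbit $\Q\cup\{\infty\}$, equivalently $[0]_{\PGL_2(\Z)}$) since $\PGL_2(\Z)$ acts transitively on $\Q\cap[0,1]$. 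Conversely a quadratic irrational has eventually periodic itinerary by Proposition \ref{lagrange}(ii). Hence the cycle labeled by $[x]_{\PGL_2(\Z)}$ has finitely many atoms iff $x$ (equivalently every/some representative of its class) solves an integer quadratic.

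\textbf{The main obstacle.} The routine parts are the reductions via Propositions \ref{universality} and \ref{rep}; the real content is entirely packaged in Corollary \ref{strongserret} (itself resting on Proposition \ref{fnprop}, whose proof via the Panti transducer is the technical heart) and in Proposition \ref{lagrange}(ii). So within \emph{this} proof the only genuinely delicate point is the careful bookkeeping of rational numbers: ensuring that the missing itineraries (the $\overline{1}$ tail class) on the symbolic side match up with the single $\PGL_2(\Z)$-orbit of rationals on the arithmetic side, and confirming that this rational class does not spuriously violate the quadratic-equation dichotomy. I expect to spend most of the write-up making sure the map is well-defined and surjective \emph{including} these boundary cases, and verifying that ``finitely many atoms'' is insensitive to the two-fold ambiguity in itineraries of rationals (both itineraries of a rational lie in the $\overline{1}$ tail class, so this is automatic). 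Everything else is a direct appeal to the cited results.
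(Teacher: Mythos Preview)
Your proposal is correct and follows essentially the same route as the paper: reduce to tail classes in $\Omega_N$ via Proposition \ref{universality}, use the two directions of Corollary \ref{strongserret} for well-definedness and injectivity, Remark \ref{decodesurj} for surjectivity, and then Proposition \ref{lagrange}(ii) together with Proposition \ref{fnprop}(iii) for the atom count. You make explicit two small points the paper leaves implicit (that every real is $\PGL_2(\Z)$-equivalent to a point of $[0,1]$, and that rationals count as solutions of integer quadratics), but the structure and ingredients are identical.
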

\begin{proof}
By Proposition \ref{universality}, there is a bijection between unitary equivalence classes of irreducible permutative representations of $\OO_N$ and the subspaces $\mathcal{H}_{[(x_n)]}$ of $\ell^2(\Omega_N)$. We consider the bijection 
\begin{equation}
\mathcal{H}_{[(x_n)]} \mapsto [\Dec_{F_N}((x_n))]_{\PGL_2(\Z)} \in [0,1] / \sim_{\PGL_2(\Z)}.
\end{equation}
This is well defined by the forward implication in Corollary \ref{strongserret}. It is injective by the backwards implication in Corollary \ref{strongserret}, and surjective by Remark \ref{decodesurj}. By Proposition \ref{lagrange}(ii) for irrationals and Proposition \ref{fnprop}(iii) for rationals, $[(x_n)]$ is labeled by a class of solutions to quadratic equations with integer coefficients if and only if $[(x_n)]$ contains a periodic sequence. This is equivalent to consisting of finitely many eventual equivalence classes, which by Proposition \ref{universality}(iii) is equivalent to the associated irreducible permutative representation of $\OO_N$ having finitely many atoms. \end{proof}
\begin{remark}\label{remarkcoresp}
If $x \in [0,1]$ has $F_N$-itinerary $(x_n)$ and $w_n$ is the word $x_1x_2 \cdots x_n$, then irreducible permutative representations of $\OO_N$ labeled by $[x]_{\PGL_2(\Z)}$ are characterized by the existence of a vector $\xi$ such that $S_{w_n}^* \xi \neq 0$ for $n \in \N$. For $\pi_S^N|_{\mathcal{H}_{[(x_n)]}}$, $\xi$ is simply $e_{(x_n)}$. We consider examples in Section \ref{mrex}.
\end{remark}
	\begin{theorem}\label{thmjmp}
	Let  $\{\I, F, \{f_i\}_{i=1}^N, \{\Delta_i\}_{i=1}^N\}$ be the BFS associated to an SCFA, and $G= G(F,E)$ be the jump transformation of $F$ induced on $E$. There is a unital embedding $\varphi:\OO_{\infty} \to \OO_N$ which is compatible with the representations $\pi_F$ and $\pi_G$ (as defined in Proposition \ref{BFS}) in the sense that $\ell^2({\I}_E)$ sits inside $\ell^2(\I)$ as a closed, invariant subspace on which 
	\begin{equation}\label{comp}
	\pi_F \circ \varphi_F = \pi_G.
	\end{equation}
	\end{theorem}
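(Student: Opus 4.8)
The plan is to produce the embedding $\varphi \colon \OO_\infty \to \OO_N$ explicitly at the level of the generators, using the combinatorics of the jump transformation described in Definition \ref{jumps}, and then verify \eqref{comp} by a direct computation on basis vectors. Recall that $E = \bigcup_{i=j}^k \Delta_i$ for some $1 \le j \le k \le N$, that $r(x) = \inf\{n \in \N_0 : F^n(x) \in E\}$ for $x \in \I_E$, and that $G = G(F,E)$ has countably many inverse branches $\{g_m\}_{m=1}^\infty$, each of which is a composition $f_{w}$ for a finite word $w$ in $\N_N$ whose last letter lies in $\{j, \dots, k\}$ and none of whose proper prefixes end in $\{j,\dots,k\}$; these words $\{w(m)\}_{m=1}^\infty$ are precisely the admissible ``return words'' to $E$. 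The natural candidate is $\varphi(S_m) = S_{w(m)} := S_{w(m)_1} \cdots S_{w(m)_\ell}$ for each $m \in \N$.

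First I would check that $\{S_{w(m)}\}_{m=1}^\infty$ satisfies the Cuntz relations of $\OO_\infty$: orthonormality $S_{w(m)}^* S_{w(m')} = \delta_{m m'} 1$ follows because $w(m)$ and $w(m')$ are incomparable words (neither is a prefix of the other) whenever $m \neq m'$, using $S_i^* S_j = \delta_{ij} 1$ repeatedly; and $\sum_{m} S_{w(m)} S_{w(m)}^* \le 1$ holds because the cylinder sets indexed by the $w(m)$ are pairwise disjoint. Since $\OO_\infty$ is simple and unital, this collection of isometries determines a unital embedding $\varphi$ (which I should write as $\varphi_F$ to record its dependence on the branch data, matching the statement).

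Next I would identify $\ell^2(\I_E)$ inside $\ell^2(\I)$ as the closed subspace spanned by $\{e_x : x \in \I_E\}$ and show it is invariant under $\pi_F(\varphi_F(\OO_\infty))$. Invariance under each $\pi_F(\varphi_F(S_m)) = \pi_F(S_{w(m)})$ is clear since $f_{w(m)}$ maps $\I$ into $\Delta_{w(m)_\ell} \subseteq E \subseteq \I_E$ — more precisely, $f_{w(m)}(\I_E) \subseteq \I_E$ because a point returning to $E$ infinitely often still does so after prepending a return word. Invariance under the adjoints $\pi_F(S_{w(m)})^*$ uses that $f_{w(m)}^{-1}$ restricted to $f_{w(m)}(\I)$ lands in $\I$, and that on $\I_E$ the inverse branches $\{g_m\}$ cover $\I_E$ with disjoint ranges, so $\bigoplus_m \operatorname{range}(g_m) = \I_E$. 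Then the verification of \eqref{comp} is the computation, for $x \in \I_E$,
\begin{equation*}
\pi_F(\varphi_F(S_m)) e_x = \pi_F(S_{w(m)}) e_x = e_{f_{w(m)}(x)} = e_{g_m(x)} = \pi_G(S_m) e_x,
\end{equation*}
where the middle equality is iterated application of \eqref{BFSrepeqn} and the fact that $g_m = f_{w(m)}$ as maps on $\I_E$, which is exactly the content of the jump-transformation description: $G^{-1}$'s $m$-th branch is $F^{r+1}$ composed appropriately, i.e., the composition of inverse branches $f_{w(m)_1} \circ \cdots \circ f_{w(m)_\ell}$.

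The main obstacle I anticipate is book-keeping the indexing of the return words $w(m)$ and confirming that they genuinely enumerate the inverse branches of $G$ on $\I_E$ with disjoint full ranges — in other words, pinning down precisely which finite words arise and checking that $\bigsqcup_m g_m(\I_E) = \I_E$. This is essentially the assertion at the end of Definition \ref{jumps} that the $g_m$ form a BFS on $\I_E$, combined with the observation that being an ``itinerary prefix that first returns to $E$'' is the same condition as ``admissible return word''; once that dictionary is set up cleanly, orthonormality, the bound $\sum S_{w(m)}S_{w(m)}^* \le 1$, invariance, and the intertwining identity all fall out of the displayed one-line basis computation above. A secondary point requiring a line of care is that $\I_E$ is nonempty and that $\ell^2(\I_E)$ is a genuine closed subspace — the former because, e.g., periodic points with itineraries containing a letter in $\{j,\dots,k\}$ infinitely often exist, and the latter is automatic.
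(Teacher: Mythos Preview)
Your proposal is correct and follows essentially the same route as the paper: define $\varphi(T_m)=S_{w(m)}$ where $w(m)$ is the return word encoding the $m$-th inverse branch $g_m$ of $G$, verify the $\OO_\infty$ relations via the prefix-incomparability of distinct return words, and check the intertwining on basis vectors. The only noteworthy difference is that the paper observes the stronger fact that $\ell^2(\I_E)$ is invariant under the full action of $\pi_F(\OO_N)$ (since $\I_E$ is both $f_i$- and $F$-invariant), which gives invariance under $\pi_F\circ\varphi_F(\OO_\infty)$ for free and spares you the separate adjoint check.
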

\begin{proof}
From Definition \ref{jumps}, ${\I}_E = \{x \in \I : F^n(x) \in E \text{ for infinitely many } n \in \N \}$, and hence $\ell^2(\I_{E})$ is invariant under the action of $\pi_F(\OO_N)$.  Let $\{T_j : j \in \N\}$ be the generators of $\OO_{\infty}$ and $\{S_i: i = 1, \dots, N\}$ be the generators of $\OO_N$. Let $f$  $ = \{ f_i : \ran(f_i) \subset E\}$ and $f_{E^c} = \{ f_i : \ran(f_i) \cap E = \emptyset\}$. The inverse branches $g_j$ of $G(F,E)$ are of the form $g_j =f_{j_1} \circ f_{j_2} \circ \cdots \circ f_{j_k}$ where $f_{j_k} \in f_E$ and $f_{j_1}, \dots, f_{{j_{k-1}}} \in f_{E^c}$. We denote the word $j_1 j_2 \cdots  j_k$ by $\mu_j$. Define $\varphi(1) =1$ and
\begin{equation}
\varphi(T_j) = S_{\mu_j},
\end{equation}
from which (\ref{comp}) immediately follows. To show that $\varphi$ is an embedding, it suffices to verify (\ref{cuntz1}). For $j, j' \in \N$, only $f_{j_k}$ and $f_{{j'}_{k'}}$ belong to $f_E$. Hence $\mu_j$ is a left factor of $\mu_{j'}$ only if $j=j'$. This verifies the left hand equality of (\ref{cuntz1}):
\begin{equation}
\varphi(T_{j'}^*)\varphi(T_j) = S_{\mu_{j'}}^*S_{\mu_j} = \delta_{j, j'}1.
\end{equation}
For $j \neq j'$, $\mu_j$ is not a left factor of $\mu_{j'}$ nor vice versa so $\varphi(T_j)\varphi(T_j)^*$ and $\varphi(T_{j'})\varphi(T_{j'})^*$ are projections with disjoint ranges, verifying the right hand inequality of (\ref{cuntz1}).
\end{proof}
\begin{remark}
A theorem of a similar flavor appears in \cite{KawaLascuColt}, relating representations associated with the regular Gauss and Farey maps.
\end{remark}
The `flip-flop' \cite{Arch} automorphism $\theta$ of $\OO_2$ is defined by $\theta(S_1) =S_2$ and $\theta(S_2) =S_1$. Since $\theta$ is an involution, it determines an action of the group $\Z_2 := \Z / 2\Z$ on $\OO_2$. We write $\OO_2 \rtimes_{\theta} \Z_2$ for  the associated crossed product. We refer to Chapter 2, Section 2.3 of \cite{Williams} for a treatment of crossed products of $C^*$-algebras by finite groups. Recall the SCFA $F_B$ introduced in Example \ref{cfexamples}(ii).
	\begin{proposition} The representation $\pi_{F_B}$ of $\OO_2$ on $\ell^2(\I)$ extends to a representation $\tilde{\pi}_{F_B}$ of $\OO_2 \rtimes_{\theta} \Z_2$ on $\ell^2(\I)$.
\end{proposition}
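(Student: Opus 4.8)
The plan is to construct a covariant representation of the $C^*$--dynamical system $(\OO_2, \Z_2, \theta)$ on $\ell^2(\I)$ whose underlying representation of $\OO_2$ is $\pi_{F_B}$, and then appeal to the universal property of the crossed product. Representations of $\OO_2 \rtimes_{\theta} \Z_2$ are in bijection with covariant pairs $(\rho, v)$, where $\rho$ is a representation of $\OO_2$ on a Hilbert space $\mathcal{H}$ and $v$ is a unitary representation of $\Z_2$ on $\mathcal{H}$ satisfying $v_g\, \rho(a)\, v_g^* = \rho(\theta_g(a))$ for all $a \in \OO_2$ and $g \in \Z_2$ (see \cite{Williams}, Section 2.3; since $\Z_2$ is finite there is no ambiguity between the full and reduced crossed products). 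Hence it will suffice to exhibit a unitary $U$ on $\ell^2(\I)$ with $U^2 = 1$ and $U \pi_{F_B}(S_i) U^* = \pi_{F_B}(\theta(S_i))$ for $i \in \{1, 2\}$; the extension $\tilde{\pi}_{F_B}$ is then defined by $\tilde{\pi}_{F_B}|_{\OO_2} = \pi_{F_B}$ together with $\tilde{\pi}_{F_B}(u) = U$, where $u$ denotes the canonical unitary in $\OO_2 \rtimes_{\theta} \Z_2$ implementing $\theta$.

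The operator $U$ I would use is the one induced by the reflection $T(x) = 1 - x$ of the unit interval. First I would note that $T$ restricts to an involutive bijection of $\I = [0,1] \setminus \Q$, so that $U e_{\omega} := e_{1-\omega}$ defines a self-adjoint unitary on $\ell^2(\I)$ with $U^2 = 1$. Next I would recall from the matrix computation following Definition \ref{scfa} (equivalently, from Lemma \ref{unimodularstructure}, since both branches of $F_B$ have $\epsilon_i = 1$) that the inverse branches of $F_B$ are $h_1 = b_1 : x \mapsto \frac{x}{x+1}$ and $h_2 = b_2 : x \mapsto \frac{1}{2-x}$, and verify the identities $T \circ h_1 \circ T = h_2$ and $T \circ h_2 \circ T = h_1$. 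These amount to the single relation $T b_1 T = b_2$ in $\PGL_2(\Z)$, where $T = \left[ \begin{smallmatrix} -1 & 1 \\ 0 & 1 \end{smallmatrix} \right]$, a one-line matrix check. Since $\pi_{F_B}(S_i) e_{\omega} = e_{h_i(\omega)}$, this gives
\[
U \pi_{F_B}(S_1) U^* e_{\omega} = e_{(T \circ h_1 \circ T)(\omega)} = e_{h_2(\omega)} = \pi_{F_B}(S_2) e_{\omega},
\]
and symmetrically $U \pi_{F_B}(S_2) U^* = \pi_{F_B}(S_1)$, i.e.\ $U \pi_{F_B}(a) U^* = \pi_{F_B}(\theta(a))$ for all $a \in \OO_2$. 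Thus $(\pi_{F_B}, U)$ is covariant and yields the desired representation $\tilde{\pi}_{F_B}$ of $\OO_2 \rtimes_{\theta} \Z_2$ on $\ell^2(\I)$.

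I do not anticipate a genuine obstacle here; the argument is essentially a verification. The only points that need care are checking that the reflection $x \mapsto 1-x$ does preserve $\I$ (so that $U$ is defined on the right Hilbert space) and keeping the bookkeeping of the crossed product straight. The content of the statement is simply the observation that the symmetry $x \mapsto 1-x$ of the interval dynamical system $F_B$ interchanges its two inverse branches $b_1$ and $b_2$, which is exactly the flip--flop relation $\theta(S_1) = S_2$ on $\OO_2$.
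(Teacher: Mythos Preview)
Your proof is correct and follows essentially the same approach as the paper: both define the implementing unitary by $Ue_x = e_{1-x}$ and verify the covariance relation $U\pi_{F_B}(S_1) = \pi_{F_B}(S_2)U$ using the explicit inverse branches $x \mapsto \frac{x}{x+1}$ and $x \mapsto \frac{1}{2-x}$. Your framing via covariant pairs and the universal property of the crossed product is slightly more explicit than the paper's, but the mathematical content is identical.
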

\begin{proof} 
 Let $U_{\theta} \in \OO_2 \rtimes_{\theta} \Z_2$ be the unitary which implements $\theta$, i.e.
 \begin{equation}
 U_{\theta}A U_{\theta}^* = \theta(A), \qquad A \in \OO_2.
 \end{equation} 
 Since $S_1$ and $S_2$ generate $\OO_2$ and $U_{\theta} = U_{\theta}^*$, this is equivalent to
 \begin{equation}
  U_{\theta}S_1 = S_2 U_{\theta}.
 \end{equation}
   Elements of $\OO_2 \rtimes_{\theta} \Z_2$ can be written in the form $A + U_{\theta}B$, where $A,B \in \OO_2$. Therefore any self-adjoint unitary $U:\ell^2(\I) \to \ell^2(\I)$ which satisfies  
 \begin{equation}
 U \pi_{F_B}(S_1) = \pi_{F_B}(S_2)  U
 \end{equation}
defines an extension $\tilde{\pi}_{F_B}$ by setting $\tilde{\pi}_{F_B}(U_{\theta}) =U$. Define the self-adjoint unitary $Ue_x = e_{1-x}$. Equations (\ref{BFSrepeqn}) and (\ref{matrixformula}) yield
\begin{equation}
\pi_{F_B}(S_1)e_x = e_{\frac{x}{1+x}}, \qquad
\pi_{F_B}(S_2)e_x = e_{\frac{1}{2-x}}.
\end{equation}
Applying these, 
\begin{equation}
U\pi_{F_B}(S_1) = \pi_{F_B}(S_2) U,
\end{equation}
and hence $\tilde{\pi}_{F_B}(U_{\theta})e_x = e_{1-x}$ gives the claimed extension.
\end{proof}
	\begin{theorem}\label{o2emb} Let $\{\I, F, \{f_i\}_{i=1}^N, \{\Delta_i\}_{i=1}^N\}$ be the BFS associated to an SCFA. There is a unital embedding $ \psi_F :\OO_N \to \OO_2 \rtimes_{\theta} \Z_2$ such that the following diagram commutes:\\
	\begin{center}
	\begin{tikzcd}
\OO_2 \rtimes_{\theta} \Z_2  \arrow[r, "\tilde{\pi}_{F_B}"] & B(\ell^2(\I)) \\
 \OO_N \arrow[u, "\psi_F"] \arrow[ur,"\pi_{F}"]&
\end{tikzcd}
\end{center}
	\end{theorem}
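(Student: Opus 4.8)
The plan is to write $\psi_F$ down explicitly using the decomposition of the inverse branches supplied by Lemma \ref{unimodularstructure}, and then verify the Cuntz relations for its values on the generators. Write $S_1,S_2$ for the generators of $\OO_2$ and $U_\theta$ for the canonical self-adjoint unitary of $\OO_2\rtimes_{\theta}\Z_2$ implementing the flip-flop, so that $U_\theta^2=1$. By Lemma \ref{unimodularstructure}(i) each inverse branch of $F$ factors as $h_i=b_{\nu_i}T^{(1-\epsilon_i)/2}$, where $\nu_i=\nu_{i,1}\nu_{i,2}\cdots\nu_{i,k_i}$ is a word in the alphabet $\{1,2\}$ and $b_1,b_2$ are the inverse branches of $F_B$. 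Recalling from the preceding proposition that $\tilde{\pi}_{F_B}(S_j)e_x=e_{b_j(x)}$ for $j=1,2$ and $\tilde{\pi}_{F_B}(U_\theta)e_x=e_{1-x}$, I would set
\begin{equation}
\psi_F(S_i):=S_{\nu_i}U_\theta^{(1-\epsilon_i)/2}\in\OO_2\rtimes_{\theta}\Z_2,\qquad S_{\nu_i}:=S_{\nu_{i,1}}S_{\nu_{i,2}}\cdots S_{\nu_{i,k_i}},
\end{equation}
and check that this defines an embedding with the stated compatibility.

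First I would verify that the elements $\psi_F(S_i)$ satisfy the Cuntz relations (\ref{cuntz1}), so that $\psi_F$ extends to a unital $*$-homomorphism by the universal property of $\OO_N$. Since $U_\theta$ is a self-adjoint unitary with $U_\theta^2=1$, we have $\psi_F(S_i)^*\psi_F(S_j)=U_\theta^{(1-\epsilon_i)/2}S_{\nu_i}^*S_{\nu_j}U_\theta^{(1-\epsilon_j)/2}$. By Lemma \ref{unimodularstructure}(ii) no $\nu_i$ is a left factor of another, whence $S_{\nu_i}^*S_{\nu_j}=\delta_{ij}1$ in $\OO_2$; as $1-\epsilon_i\in\{0,2\}$ this gives $\psi_F(S_i)^*\psi_F(S_j)=\delta_{ij}1$. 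For the other relation, $\psi_F(S_i)\psi_F(S_i)^*=S_{\nu_i}U_\theta^{1-\epsilon_i}S_{\nu_i}^*=S_{\nu_i}S_{\nu_i}^*$, and the identity $\sum_{i=1}^N S_{\nu_i}S_{\nu_i}^*=1$ follows from the fact (Lemma \ref{unimodularstructure}(ii)) that the words $\nu_i$ are the leaves of a finite rooted binary tree: starting from $1$ and repeatedly refining a leaf $\mu$ via $S_\mu S_\mu^*=S_{\mu 1}S_{\mu 1}^*+S_{\mu 2}S_{\mu 2}^*$, the sum of $S_\nu S_\nu^*$ over the current set of leaves remains equal to $1$ until the tree of the $\nu_i$ is reached. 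Hence $\psi_F$ is a unital $*$-homomorphism, and it is automatically injective because $\OO_N$ is simple.

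It remains to check that the diagram commutes, i.e.\ $\tilde{\pi}_{F_B}\circ\psi_F=\pi_F$; since both sides are $*$-homomorphisms it suffices to compare them on the generators $S_i$. Unwinding the product of operators and using the extension computed in the previous proposition,
\begin{equation}
\tilde{\pi}_{F_B}(\psi_F(S_i))\,e_x=\tilde{\pi}_{F_B}(S_{\nu_i})\,e_{T^{(1-\epsilon_i)/2}(x)}=e_{b_{\nu_i}\left(T^{(1-\epsilon_i)/2}(x)\right)},
\end{equation}
where $b_{\nu_i}$ is read as the composition $b_{\nu_{i,1}}\circ\cdots\circ b_{\nu_{i,k_i}}$, which is exactly the fractional linear transformation given by the corresponding matrix product. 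By Lemma \ref{unimodularstructure}(i) this equals $e_{h_i(x)}=e_{f_i(x)}=\pi_F(S_i)e_x$, which finishes the argument.

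All of the computations are routine once Lemma \ref{unimodularstructure} is available; the one step needing a little care is the identity $\sum_i S_{\nu_i}S_{\nu_i}^*=1$, for which one must use the full binary-tree (complete prefix code) structure of the words $\nu_i$ from Lemma \ref{unimodularstructure}(ii), not merely their pairwise incomparability, together with the observation that $U_\theta^{1-\epsilon_i}=1$ so that the flip-flop letters attached to each $\psi_F(S_i)$ cancel in the range projections.
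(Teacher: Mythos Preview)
Your proof is correct and follows essentially the same approach as the paper's: define $\psi_F(S_i)=S_{\nu_i}U_\theta^{(1-\epsilon_i)/2}$ via Lemma~\ref{unimodularstructure}(i), verify the Cuntz relations using the prefix-code/binary-tree structure from Lemma~\ref{unimodularstructure}(ii), and read off the commutativity of the diagram from the definitions. Your write-up is in fact a bit more explicit than the paper's (you spell out the commutativity check on basis vectors and note injectivity via simplicity of $\OO_N$); the only cosmetic issue is that you reuse $S_1,S_2$ for the $\OO_2$ generators while also using $S_i$ for the $\OO_N$ generators, where the paper writes $B_1,B_2$ to avoid the clash.
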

\begin{proof}
Let $\{S_i\}_{i=1}^N$ be the generators of $\OO_N$, and $B_1, B_2$, and $U_{\theta}$ the generators of $ \OO_2 \rtimes_{\theta} \Z_2$. Applying Lemma \ref{unimodularstructure}(i), write $f_i = b_{\nu_i} T^{e_i}$ and define
	\begin{equation}
	\psi_F(S_i) = B_{\nu_i} U_{\theta}^{e_i}
	\end{equation}
which immediately satisfies $\tilde{\pi}_{F_B} \circ \psi_F = \pi_F $. Consider 
\begin{equation}
\psi(S_{i'}^*)\psi(S_i) = U_{\theta}^{e_{i'}} B_{\mu_{i'}}^*   B_{\nu_i} U_{\theta}^{e_i}.
\end{equation} This expression is nonzero only if $\mu_{i}$ is a left factor of $\mu_{i'}$ or vice versa. By Lemma \ref{unimodularstructure}(ii) this occurs only when $i = i'$. Hence 
\begin{equation}
\psi(S_{i'}^*)\psi(S_i) = \delta_{i,i'}1,
\end{equation}
verifying the left-hand side of (\ref{cuntz1}). Now consider \begin{equation}\sum_{1 \leq i \leq N} \psi_F(S_{i})\psi_F(S_i)^* = \sum_{1 \leq i \leq N} B_{\nu_i} U_{\theta}^{p_i}U_{\theta}^{p_i}
B_{\nu_i}^* =  \sum_{1 \leq i \leq N} B_{\nu_i} B_{\nu_i}^*.
\end{equation}
Repeated application of $B_1B_1^* + B_2B_2^* =1$, together with the binary tree structure described in Lemma \ref{unimodularstructure}(ii), shows that \begin{equation} \sum_{1 \leq i \leq N} B_{\nu_i} B_{\nu_i}^* =1,
\end{equation}
verifying the right-hand side of (\ref{cuntz1}).
	\end{proof}
\begin{remark} If the signs $\epsilon_i$ of the SCFA are all positive, Theorem \ref{o2emb} gives an embedding into $\OO_2$.
\end{remark}

\section{Examples}\label{mrex}
In this section we give examples of Theorem \ref{mr}. We choose a representative of a $\PGL_2(\Z)$-equivalence class from $[0,1]$ and characterize the associated unitary equivalence class of irreducible permutative representations of $\OO_N$ for $N =2,3, \dots$. 
\begin{example}
For $N =2,3, \dots$, the ${F}_N$-itinerary of $0$ is $\overline{1} = 1,1,1,\dots$. The irreducible permutative representations of $\OO_N$ labeled by $0$ are characterized by the existence of a vector $\xi$ such that $S_1\xi = \xi$, and consist of a single atom.
\end{example}
\begin{example}
Similarly, for $N =2,3, \dots$, the ${F}_N$-itinerary of $\frac{\sqrt{5} -1}{2}$ is $\overline{N}$, and the irreducible permutative representations of $\OO_N$ labeled by $\frac{\sqrt{5} -1}{2}$ are characterized by the existence of a vector $\xi$ such that $S_N \xi = \xi$, and consist of a single atom.
\end{example}
\begin{example}
For $N=2$, $\sqrt{2} -1$ has ${F}_2$-itinerary $\overline{12}$, so the corresponding irreducible permutative representations of $\OO_2$ are characterized by the existence of a vector $\xi$ such that $S_2S_1 \xi = \xi$. There are two atoms, corresponding to the two eventual equivalence classes. For $2 < N < \infty$, $\sqrt{2}-1$ has $F_N$-itinerary $\overline{N-1}$, so the corresponding irreducible permutative representations of $\OO_N$ are characterized by the existence of a vector $\xi$ such that $S_{N-1} \xi = \xi$, and consist of a single atom.
\end{example}
\begin{remark}
The existence of an eigenvector for a finite composition of the generating isometries is how Hayashi, Kawamura, and Lascu characterize representations associated with quadratic irrationals for $\OO_{\infty}$. This is equivalent to having finitely many atoms, which is the characterization established in Theorem \ref{mr} for finite $N$.
\end{remark}
Finally, we consider a label which is not a quadratic root, for which the corresponding irreducible permutative representations must have countably many atoms.
\begin{example}
Let $e$ be the base of the natural logarithm. The regular continued fraction expansion of $e-2$ is $[1,2,1, 1,4,1, 1,6,1,1,8,1, \dots]$. The ${F}_2$-itinerary of $e-2$ is $$2,1,2,2,2,1,1,1,2,2,2,1,1,1,1,1,2, 2,1,1,1,1,1,1,2,2,\dots.$$ For each $N$ the itinerary is aperiodic. The ${F}_3$, ${F}_4$, and  ${F}_5$-itineraries of $e-2$ are respectively
\begin{eqnarray*}
& 3,2,3,3,1,1,2,3,3,1,1,1,1,2,3, 3,1,1,1,1,1,1,2,3, \dots,\\
& 4,3,4, 4, 1,2,4, 4,1,1,1,2,4, 1,1,1,1,1,2,4, \dots,\\
& 5,4,5,5,2,5,5,1,1,2,5,1,1,1,1,2,5,\dots.
\end{eqnarray*}
Every finite composition of the generating isometries of the associated representation of $\OO_N$ is a pure isometry. In particular, there are no eigenvectors as in the previous examples. The $F_N$-itineraries nevertheless characterize the representations as discussed in Remark \ref{remarkcoresp}.
\end{example}

\end{document}